\newtheorem{theorem}{Theorem}[section]
\newtheorem{lemma}[theorem]{Lemma}
\theoremstyle{definition}
\newtheorem{remark}[theorem]{Remark}
\numberwithin{equation}{section}
\renewcommand{\(}{\left(}
\renewcommand{\)}{\right)}
\newcommand{\oa}{\left\{}
\newcommand{\fa}{\right\}}
\renewcommand{\exp}[1]{\textrm{exp}\oa #1 \fa}
\begin{document}

\title{Primeness and Dynamics of Some Classes of Entire Functions}

\author[K. S. Charak]{Kuldeep Singh Charak}
\address{
\begin{tabular}{lll}
&Kuldeep Singh Charak\\
&Department of Mathematics\\
&University of Jammu\\
&Jammu-180 006\\ 
&India\\
\end{tabular}}
\email{kscharak7@rediffmail.com}

\author[M. Kumar]{Manish Kumar}
\address{
\begin{tabular}{lll}
&Manish Kumar\\
&Department of Mathematics\\
&University of Jammu\\
&Jammu-180 006\\ 
&India\\
\end{tabular}}
\email{manishbarmaan@gmail.com}
\author[A. Singh]{Anil Singh}
\address{
\begin{tabular}{lll}
&Anil Singh\\
&Department of Mathematics\\
&University of Jammu\\
&Jammu-180 006\\
&India
\end{tabular}}
\email{anilmanhasfeb90@gmail.com }

\begin{abstract}
 In this paper we investigate the primeness of a class of entire functions and discuss the dynamics of a periodic member $f$ of this class with respect to a transcendental entire function $g$ that permutes with $f.$ In particular we show that the Julia sets of $f$ and $g$ are identical.
\end{abstract}
\renewcommand{\thefootnote}{\fnsymbol{footnote}}
\footnotetext{2010 {\it Mathematics Subject Classification}. 30D15, 37F10   }
\footnotetext{{\it Keywords and phrases}: Prime entire functions, Nevanlinna theory, Fatou and Julia sets.   }
\footnotetext{The work of first author is partially supported by Mathematical Research Impact Centric Support (MATRICS) grant, File No. MTR/2018/000446, by the Science and Engineering Research Board (SERB), Department of Science and Technology (DST), Government of India. }

\maketitle

\section{Introduction}

A meromorphic function $F$ is said to be factorizable with  $f$ and $g$ as left and right factors respectively, if it can be expressed as $F:=f\circ g$ where  $f$ is meromorphic and $g$ is entire ($g$ may be meromorphic when $f$ is rational). $F$ is said to be prime (left-prime and right prime), if every factorization of $F$ of the above form implies either $f$ is bilinear or $g$ is linear ($f$ is bilinear whenever $g$ is transcendental and $g$ is linear whenever $f$ is transcendental). When the factors $f$ and $g$ of $F$ are restricted to entire functions, the factorization is said to be in the entire sense. Accordingly, we may use the term $F$ is prime (left-prime and right prime) in the entire sense. For factorization of meromorphic functions one may refer to Gross \cite{gross} and Chuang and Yang \cite{chuang}.

\medskip

Suppose $H$ and $\alpha $ are entire functions satisfying
\begin{enumerate}
\item[(a)] $\alpha^{'}$ has at least one zero,\\
\item[(b)] $T(r,\alpha)=o(T(r,H)) \mbox{ as } r\rightarrow \infty $,\\
\item[(c)] $H^{'}$ and $\alpha^{'}$ have no common zeros.
\end{enumerate}
Now consider the class 
$$\mathcal{F}:= \{F_a(z)=H(z)+a\alpha (z): a\in \mathbb{C} \}.$$
$\mathcal{F}$ is a most general class containing both periodic as well as non-periodic prime entire functions.  Urabe \cite{urabe1} proved that the periodic entire functions of the form 
\begin{equation}
F_a(z)=H(z)+\frac{a}{2}\sin 2z,
\label{eq1}
\end{equation}
where 
$$H(z)=\sin z h(\cos z)$$
in which 
$$ h(w)=\exp {\psi\left((2w^{2}-1)^{2}\right)} \mbox{ for some entire function } \psi,$$ 
are prime for most values of $a\in \mathbb{C}$. We shall denote by $\mathcal{U}$, the class of all functions of the form (\ref {eq1}).\\
Also, Qiao \cite{Qiao} proved that the periodic entire functions of the form 
\begin{equation}
G_a(z):=\cos z\left(H(\sin z)+2a\right),
\label{eq2}
\end{equation}
where $H$ is an odd transcendental entire function  such that order of $H(\sin z)$ is finite, is prime for most values of $a\in \mathbb{C}.$ Let us denote by $\mathcal{Q}$, the class of functions of the form (\ref {eq2}).

 It is quite simple to note that the classes $\mathcal{U}$ and $\mathcal{Q}$ are subclasses of $\mathcal{F}$. Also $\mathcal{F}$ contains the class of non-periodic prime entire functions due to Singh and Charak \cite{apcharak}. Now it is natural to investigate the primeness of members of $\mathcal{F}$. The importance of this investigation lies in the fact that if $F_a\in\mathcal{F}$ happens to be prime in entire sense and $g$ is any entire function permutable with $F_a$, then Julia sets of $g$ and  $F_a$  are identical for most values of $a$; for example see  \cite{ng, noda, urabe}.

\medskip

 Let $f$ be an entire  function. We denote by $f^n$ the $nth$ iterate of $f$. By a Julia set $J(f)$ of an entire function $f$ we mean the complement $\mathbb{C}\setminus F(f)$ of Fatou set $F(f) $ of $f$ defined as 
$$F(f):=\left\{z\in\mathbb{C}:\{f^n\}\mbox{  is normal in a neighborhood of }z\right\}.$$
In this paper, we shall find out some more subclasses of $\mathcal{F}$ which are prime and study the dynamics of such subclasses with respect to any non-linear entire function permuting with a given member of these subclasses.

 We shall adopt the following notations in our discussions throughout:
\begin{enumerate}
\item $\mathcal{H}(D): \mbox{ the class of all holomorphic functions on a domain } D\subset\mathbb{C}.$
\item $\mathbb{C}^{*}:= \mathbb{C}\setminus \left\{0\right\},$ the punctured plane.
\item $\mathcal{E}_{T}: \mbox{ the class of all transcendental entire functions.}$
\end{enumerate}

\section{Factorization of some periodic subclasses of $\mathcal{F}$}
 The subclasses $\mathcal{U}\mbox{ and }\mathcal{Q}$ of $\mathcal{F}$ consist of prime periodic entire functions. We shall prove the primeness of a more general subclass of $\mathcal{F}$ containing $\mathcal{U}.$ Also, we shall investigate some interesting properties of the functions in this  general subclass as well as $\mathcal{Q}.$ Actually, the purpose of this investigation is to study the dynamics of these subclasses of $\mathcal{F}$ in the next section.  
\begin{lemma} \label{l1} Let $H$ and $\alpha$ be entire functions such that $H'$ and $\alpha'$ have no common zeros. Let $H_a(z):= H(z)+a\alpha (z)$, where $a\in \mathbb{C}$. Then there exists a countable set $E\subset \mathbb{C}$  such that for each $c\in \mathbb{C}$ and for any $ z, w \in \{z\in \mathbb{C}: H_{a}(z)=c, H_{a}^{'}(z)=0\}$, $\alpha (z)=\alpha(w)$ for each $a\notin E$.
\end{lemma}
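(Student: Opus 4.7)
My plan is to encode the \emph{bad} condition on $a$ --- the existence of distinct critical points $z \neq w$ of $H_a$ sharing a critical value but with $\alpha(z) \neq \alpha(w)$ --- as an analytic subvariety of $\mathbb{C}^2$, and then show that the induced set of $a$-values is at most countable via a dimension argument combined with a differentiation step.

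Since $H'$ and $\alpha'$ have no common zeros, any zero of $H_a' = H' + a\alpha'$ satisfies $\alpha'(z) \neq 0$, and so $a = -H'(z)/\alpha'(z)$ is forced. Suppose $z \neq w$ are critical points of the same $H_a$ with equal critical values and $\alpha(z) \neq \alpha(w)$. Eliminating $a$ from $a = -H'(z)/\alpha'(z) = -H'(w)/\alpha'(w)$ and $H(z) + a\alpha(z) = H(w) + a\alpha(w)$ yields two entire equations in $(z,w)$:
\begin{gather*}
P(z,w) := H'(z)\alpha'(w) - H'(w)\alpha'(z) = 0, \\
Q(z,w) := \alpha'(z)\bigl(H(z) - H(w)\bigr) - H'(z)\bigl(\alpha(z) - \alpha(w)\bigr) = 0.
\end{gather*}
Set $U = \{(z,w) \in \mathbb{C}^2 : z \neq w,\ \alpha(z) \neq \alpha(w)\}$, $V = \{P = Q = 0\} \cap U$, and $E := \{-H'(z)/\alpha'(z) : (z,w) \in V\}$. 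Every bad parameter lies in $E$, so it suffices to show $E$ is countable.

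As an analytic subset of $U \subset \mathbb{C}^2$, the variety $V$ decomposes into at most countably many irreducible components. If $P \not\equiv 0$, each component has complex dimension at most $1$. Zero-dimensional components are isolated points, each contributing one value to $E$. For a one-dimensional irreducible component $C$, I would parameterize the smooth locus by a local holomorphic parameter $t$ so that $z = z(t)$ and $w = w(t)$, and set $a(t) = -H'(z(t))/\alpha'(z(t))$. Differentiating the identity $H(z(t)) + a(t)\alpha(z(t)) = H(w(t)) + a(t)\alpha(w(t))$ and noting that the coefficients of $z'(t)$ and $w'(t)$ are respectively $H'(z) + a\alpha'(z)$ and $H'(w) + a\alpha'(w)$, both of which vanish on $C$, yields
\begin{equation*}
a'(t)\bigl(\alpha(z(t)) - \alpha(w(t))\bigr) = 0;
\end{equation*}
since $\alpha(z) \neq \alpha(w)$ on $V$, this forces $a'(t) \equiv 0$, so $a$ is constant on $C$. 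In the degenerate case $P \equiv 0$, a short calculation shows $H' = -a_0 \alpha'$ for a constant $a_0$, so $a \equiv a_0$ on $U$ and $E = \{a_0\}$. Either way, $E$ is countable.

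The main technical hurdle is the differentiation step on positive-dimensional components: the cancellation producing $a'(t) = 0$ depends crucially on combining the two critical-point equations at $z$ and $w$ in just the right way. Singular points of $V$ can be handled routinely by working on the smooth locus of each component and extending by continuity of $a$ on $C$.
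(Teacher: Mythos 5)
Your argument is correct, and its engine is exactly the identity at the heart of the paper's proof: differentiating the critical-value relation $H(z)+a\alpha(z)=H(w)+a\alpha(w)$ along a family of critical points kills the $z'$ and $w'$ terms (their coefficients are $H'+a\alpha'$ evaluated at $z$ and $w$) and leaves $a'\bigl(\alpha(z)-\alpha(w)\bigr)=0$. Where you differ is in the global packaging. The paper sets $m=-H'/\alpha'$, covers the complement of its critical points by domains $G_i$ on which $m$ is univalent, and parameterizes each branch of critical points by the parameter $a$ itself via $x_i=(m|_{G_i})^{-1}$; the computation $y_i'(a)=\alpha(x_i(a))$ for the branch critical values $y_i=H_a(x_i(a))$ is your identity in disguise, and the countable exceptional set is assembled from the critical values of $m$ together with the discrete crossing sets $S_{ij}=\{y_i=y_j\}$ for pairs of branches with $y_i\not\equiv y_j$. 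You instead eliminate $a$ to get an analytic subvariety $V=\{P=Q=0\}$ of $\mathbb{C}^2$ and show $a$ is constant on each of its (at most countably many) irreducible components. Your version buys a cleaner global statement --- no bookkeeping with a countable atlas of univalent branches, and the degenerate case $P\equiv 0$ is isolated transparently --- at the cost of invoking the structure theory of analytic sets (local finiteness and countability of components, connectedness of the regular locus of an irreducible one-dimensional component), whereas the paper stays entirely within one-variable function theory and the identity theorem. Two small points to tidy: restrict $V$ to $\{\alpha'(z)\neq 0\}$ before forming $E=\{-H'(z)/\alpha'(z)\}$ (points of $V$ with $\alpha'(z)=0$ force $\alpha'(w)=0$ as well via $P=0$ and the no-common-zeros hypothesis, and correspond to no finite parameter, so they are harmless but should be excluded from the definition), and state explicitly that the component decomposition is countable because it is locally finite and $\mathbb{C}^2$ is $\sigma$-compact.
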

\begin{proof}   For any critical point $z_0$ of $H_a$, we have 
	
	 $$ a = -\frac{H^{'}(z_0)}{\alpha^{'}(z_0)}$$
Define 

\begin{flalign}
 m(z):=-\frac{H^{'}(z)}{\alpha^{'}(z)}, ~~~~~ z\in \mathbb{C}.\label{eq:20}
\end{flalign}

 Set $A:= \mathbb{C} \setminus \{z\in \mathbb{C}: m'(z)=0 \mbox{ or } \infty\}.$ Then $A$ is an open subset of $\mathbb{C}$. Let $\left\{G_i: i\in \mathbb{N}\right\}$ be an open covering of $A$ such that $m|_{G_{i}} $ is univalent and each $D_{i}=m(G_{i})$ is an open disk.
Now consider  \begin{flalign}
 M(z)&:=H(z)+m(z)\alpha(z)\label{eq:21}\\
x_{i}(w)&:=(m|_{G_{i}})^{-1}(w),~ w \in D_{i}~ (i=1,2,3,\cdots)\label{eq:22}\\
y_{i}(w)&:=M(x_{i}(w)),~ w \in D_{i} ~(i=1,2,3,\cdots) \label{eq:23}\\
I&:=\{(i,j)\in \mathbb{N}\times\mathbb{N}: D_{i}\cap D_{j}\neq \phi, y_{i}\not\equiv y_{j}\mbox{ on } D_{i}\cap D_{j}\}\label{eq:24}\\
S_{ij}&:= \{w\in  D_{i}\cap D_{j}: y_{i}(w)=y_{j}(w)\}, ~ (i,j)\in I \label{eq:25}\\
E_{0}&:=\left(\bigcup_{i=1}^{\infty}D_{i}\right)\setminus\left(\left\{m(p): m^{'}(p)=0;p\in \mathbb{C} \right\}\cup \left(\cup_{(i,j)\in I}S_{ij}\right)\right).\label{eq:26}
\end{flalign} \\
Then $E=\mathbb{C}\setminus E_{0}$ is an at most countable subset of $\mathbb{C}$.\\
Using $\left(\ref{eq:20}\right)$ in $\left(\ref{eq:21}\right)$, we get
 \begin{flalign}
M^{'}(z)=H^{'}(z)+m^{'}(z)\alpha(z)+m(z)\alpha^{'}(z)=m^{'}(z)\alpha(z).\label{eq:27}
\end{flalign}
By $\left(\ref{eq:23}\right)$ with the help of $\left(\ref{eq:21} \right)$ and $\left(\ref{eq:22}\right)$, we obtain 
\begin{flalign}
 y_{i}(w)&=H(x_i(w))+m(x_i(w))\alpha(x_i(w)) \nonumber\\
      &=H(x_i(w))+w\alpha(x_i(w))\label{eq:28}
\end{flalign}
Also by $\left(\ref{eq:23}\right)$ together with $\left(\ref{eq:22}\right)$ and $ \left(\ref{eq:27}\right)$, we have
\begin{flalign} \label{eq:29}
y_{i}^{'}(w) &= M^{'}(x_i(w))\cdot x_{i}^{'}(w) \nonumber\\
             &=m^{'}(x_{i}(w))\cdot \alpha(x_i(w))\cdot x_{i}^{'}(w) \nonumber\\
             &= \alpha(x_{i}(w)).
\end{flalign} 
Since $a\in E_0$, we have $\{z\in \mathbb{C}: H'_{a}(z)=0\}=\bigcup_{i=1}^{\infty}\left\{x_{i}(a)\right\}$. This can be verified as follows. By assumption, $H^{'}$ and $\alpha^{'}$ have no common zeros. Then, since  by (\ref{eq:22}) $m(x_{i}(a))=a$ for $a\in D_{i}$, $\alpha^{'}(x_{i}(a))\neq0$ and hence $H^{'}_{a}(x_i(a))=0.$ Conversely, let $H^{'}_a(z_0)=0$. Then $H^{'}(z_0)+a\alpha^{'}(z_0)=0$ which implies that $m(z_0)=a$. Since $a\in E_0$, $m^{'}(z_0)\neq 0$ and so $x_i(a)=z_0$ for $a\in D_{i}$ for some $i$.

 Moreover, if $y_{i}(a)=y_{j}(a)$ for some $a\in E_0$, then by (\ref{eq:24}), (\ref{eq:25}) and (\ref{eq:26}), it follows that $y^{'}_{i}(a)=y^{'}_{j}(a)$. From (\ref{eq:29}), we have 

\begin{flalign} \label{eq:30}
\alpha(x_{i}(a))=\alpha(x_{j}(a)).
\end{flalign}
Hence,  there exists a countable set $E$ in $\mathbb{C}$ such that for each $c\in \mathbb{C}$, for any $z, w \in \{z\in \mathbb{C}: H_{a}(z)=c, H_{a}^{'}(z)=0\}$ such that $\alpha (z)=\alpha(w)$, provided $a\notin E$.
\end{proof}

\medskip

Considering $H_a(z):=\cos z\cdot h(\sin{z})+a\sin{z}$, where $h\in \mathcal{E}_{T}$, in Lemma \ref{l1} and redefine $E_0$ in (\ref{eq:26}) as follows: 
\begin{flalign} \label{eq:31}
E_{0}=\left(\bigcup_{i=1}^{\infty}D_{i}\right)\setminus\left(\left\{m(p): m^{'}(p)=0;p\in \mathbb{C} \right\}\cup \left(\cup_{(i,j)\in I}S_{ij}\right)\cup\left(\cup_{i=1}^{\infty}\left\{p: h(\sin (x_i(p)))=0\right\}\right)\right).
\end{flalign}
By (\ref{eq:30}) , we have
$$\sin x_i(a)=\sin x_j(a), \mbox{ and hence } \cos x_i(a)h(\sin x_i(a))=\cos x_j(a)h(\sin x_j(a)).$$
Again by (\ref{eq:31}), we obtain 
$$\cos x_i(a)=\cos x_j(a)$$
and hence we obtain:

\begin{lemma} \label{l2} Let $h\in \mathcal{E}_{T}$ such that $h(\pm 1)\neq 0.$ Put $H_a(z):=\cos z\cdot h(\sin{z})+a\sin{z}$, where $a\in \mathbb{C}$. Then, there exists a countable set $E$ of complex numbers such that any two roots $u$ and $v$ of the simultaneous equations $$H_a(z)=c,\ H^{'}_{a}(z)=0,$$ $\cos u=\cos v$ and $\sin u=\sin v$ for any constant $c\in \mathbb{C}$, provided $a\notin E.$
\end{lemma}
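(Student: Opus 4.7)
The plan is to apply Lemma~\ref{l1} with $H(z):=\cos z\cdot h(\sin z)$ and $\alpha(z):=\sin z$, which will directly yield $\sin u=\sin v$, and then to derive $\cos u=\cos v$ from the equation $H_a(u)=H_a(v)=c$ after slightly enlarging the exceptional set to avoid the vanishing of $h(\sin x_i(a))$, exactly as sketched in (\ref{eq:31}) just above the statement.

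First I would verify the only non-trivial hypothesis of Lemma~\ref{l1}, namely that $H'$ and $\alpha'=\cos z$ share no zeros. The zeros of $\alpha'$ are $z_k=\pi/2+k\pi$, $k\in\mathbb{Z}$, at which $\cos z_k=0$ and $\sin z_k=\pm 1$. Computing $H'(z)=-\sin z\,h(\sin z)+\cos^2 z\, h'(\sin z)$ gives $H'(z_k)=\mp h(\pm 1)$, which is non-zero by the standing hypothesis $h(\pm 1)\neq 0$. Consequently, Lemma~\ref{l1} produces a countable set $E\subset\mathbb{C}$ and, for each $a\notin E$, an enumeration $\{x_i(a)\}_{i\in\mathbb{N}}$ of the critical points of $H_a$ as in (\ref{eq:22}) together with the conclusion $\alpha(u)=\alpha(v)$, that is, $\sin u=\sin v$, for any two roots $u,v$ of the simultaneous system $H_a(z)=c$, $H'_a(z)=0$.

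To promote this to $\cos u=\cos v$ I would redefine the good set $E_0$ precisely as in (\ref{eq:31}), removing in addition those $a$ for which $h(\sin x_i(a))=0$ for some $i$. For $a$ in this updated $E_0$, the equality $\sin u=\sin v$ forces $h(\sin u)=h(\sin v)\neq 0$; substituting into $H_a(u)=H_a(v)$ and cancelling the term $a\sin u=a\sin v$ leaves $\cos u\,h(\sin u)=\cos v\,h(\sin v)$, and dividing by the common non-zero value $h(\sin u)$ gives $\cos u=\cos v$. The main thing to check, more a matter of bookkeeping than difficulty, is that the enlarged complement $E=\mathbb{C}\setminus E_0$ remains countable; this is true because each $h\circ\sin\circ x_i$ is a non-constant holomorphic map on the open disk $D_i$ (its zero set is the preimage under the analytic map $\sin\circ x_i$ of the discrete zero set of the transcendental function $h$), so its zero set is discrete, and the countable union over $i$ stays countable.
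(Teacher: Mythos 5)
Your proposal is correct and follows essentially the same route as the paper: the paper derives Lemma~\ref{l2} in the paragraph preceding its statement by applying Lemma~\ref{l1} to $H(z)=\cos z\cdot h(\sin z)$, $\alpha(z)=\sin z$, redefining $E_0$ as in (\ref{eq:31}) to exclude the zeros of $h(\sin x_i(\cdot))$, and then passing from $\sin u=\sin v$ to $\cos u=\cos v$ by cancellation in $H_a(u)=H_a(v)$. Your write-up in fact supplies two checks the paper leaves implicit (that $h(\pm 1)\neq 0$ guarantees $H'$ and $\alpha'$ have no common zeros, and that the enlarged exceptional set stays countable), both of which are verified correctly.
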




 Let $f(z)$ be periodic entire function of period $\lambda \neq 0$. Then for $z\in\mathbb{C}$, we shall denote by   $[z]$, the set $\{z+n\lambda:n\in\mathbb{Z}\}$ and for $A\subset \mathbb{C}$, by$[A]$, the set $\{[z]:z\in A\}.$

\begin{theorem} \label{t1} Let $H\in \mathcal{E}_{T}$ such that $H(-z)=-H(z), H^{'}(0)\neq 0,$ and the order of $H(\sin z)$ is finite. Put $H_a(z):=\cos z(H(\sin z)+2a)$, where $a\in \mathbb{C}$. Then there exists a countable set $E\subset \mathbb{C}$ such that $H_a$ satisfies the following properties for each $a\notin E$. 
\begin{itemize} 
\item[(i)] $H_a$ is prime.
\item[(ii)] $\#\left\{[z]: H^{'}_a(z)=0\right\}=\infty$
\item[(iii)] for any $z_1, z_2 \in \left\{z\in \mathbb{C}: H_a(z)=c, H^{'}_a(z)=0 \right\},$  $\cos z_1=\cos z_2$ for any $c\in \mathbb{C}$.
\item[(iv)] $H^{'}_a$ has only simple zeros.
\end{itemize}
\end{theorem}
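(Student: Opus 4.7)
The plan is to reduce all four parts to properties of the auxiliary function $F(t):=-tH(t)+(1-t^{2})H'(t)$, which is transcendental entire because $H$ is. A direct calculation yields the key identity
\[
H_{a}'(z) \;=\; -\sin z\,(H(\sin z)+2a)+\cos^{2}z\cdot H'(\sin z) \;=\; F(\sin z)-2a\sin z.
\]
Setting $\Phi(t,a):=F(t)-2at$, we have $H_{a}'(z)=\Phi(\sin z,a)$, and hence $H_{a}''(z)=\Phi_{t}(\sin z,a)\cos z$, so $z$ is a critical point of $H_{a}$ iff $\sin z$ is a zero of $\Phi(\,\cdot\,,a)$.

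For (i), $H_a$ is exactly of the form \eqref{eq2} with the same hypotheses on $H$, so Qiao's theorem provides a countable exceptional set outside of which $H_a$ is prime. For (ii), $\Phi(\,\cdot\,,a)$ is transcendental entire, hence by Picard's theorem has infinitely many zeros $\{t_n\}$ for every $a$ outside at most a countable set of Picard-type exceptions; since $\sin z$ is $2\pi$-periodic it descends to a well-defined map on the equivalence classes $[z]$, so distinct $t_n$'s produce distinct critical classes and $\#\{[z]:H_a'(z)=0\}=\infty$.

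For (iii), apply Lemma \ref{l1} with the Lemma's $H$ and $\alpha$ replaced by $\cos z\cdot H(\sin z)$ and $2\cos z$ respectively; the sum is exactly our current $H_a(z)$. The only nontrivial hypothesis to verify is that the derivative of $\cos z\cdot H(\sin z)$ and $\alpha'(z)=-2\sin z$ share no common zero; at $\sin z=0$ that derivative equals $\cos^{2}z\cdot H'(0)=H'(0)\neq 0$. Lemma \ref{l1} then delivers $\alpha(z_1)=\alpha(z_2)$, i.e.\ $\cos z_1=\cos z_2$, for any two simultaneous roots of $H_a(z)=c,\ H_a'(z)=0$, outside a countable set of $a$'s.

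For (iv), a double zero $z_0$ of $H_a'$ satisfies $H_a''(z_0)=\Phi_t(\sin z_0,a)\cos z_0=0$. If $\cos z_0=0$, then $\sin z_0=\pm 1$ and $H_a'(z_0)=0$ forces $a=\pm H(1)/2$ by the oddness of $H$, contributing at most two bad values. Otherwise $t_0:=\sin z_0$ is a multiple zero of $\Phi(\cdot,a)$, whence $F(t_0)=2at_0$ and $F'(t_0)=2a$; eliminating $a$ yields $G(t_0)=0$ for $G(t):=F(t)-tF'(t)$. Since $G(0)=F(0)=H'(0)\neq 0$, $G$ is not identically zero, so its zero set is countable and so is the set of corresponding bad values $a=F'(t_0)/2$. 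Taking $E$ to be the union of the four exceptional sets finishes the proof. The main obstacle is (iv): spotting the clean reduction $H_a''(z)=\Phi_t(\sin z,a)\cos z$ and using the hypothesis $H'(0)\neq 0$ to rule out the degeneracy $G\equiv 0$, without which the bad set of $a$ could fail to be countable.
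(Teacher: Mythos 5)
Your argument is correct, and while it shares the paper's skeleton --- (i) is delegated to Qiao's theorem and (iii) to Lemma \ref{l1} in both treatments --- your handling of (ii) and (iv) rests on a genuinely different change of variables. The paper conjugates by $w=e^{iz}$, writes $H_a(z)=k_a(e^{iz})$ with $k_a\in\mathcal{H}(\mathbb{C}^{*})$, applies Picard's big theorem to $w^{2}k'(w)/(w^{2}-1)$ for (ii), and for (iv) eliminates $a$ from $k_a'(w_0)=k_a''(w_0)=0$, ruling out the degenerate identity $w(w^{2}-1)k''-2k'\equiv 0$ by arguing that $k''$ would acquire poles at $w=\pm1$. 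You instead substitute $t=\sin z$, obtaining $H_a'(z)=F(\sin z)-2a\sin z$ with $F(t)=-tH(t)+(1-t^{2})H'(t)$ entire; (ii) then says that the meromorphic function $F(t)/(2t)$ (which has a pole at $0$ since $F(0)=H'(0)\neq0$) takes all but at most two values infinitely often, and (iv) reduces to $G:=F-tF'\not\equiv 0$, which is immediate from $G(0)=H'(0)\neq 0$. Your version of (iv) is cleaner than the paper's: the non-degeneracy is transparent, the hypothesis $H'(0)\neq 0$ is seen to be exactly what is needed, and the boundary case $\cos z_0=0$ (hidden inside the paper's $\mathbb{C}^{*}$ framework) is disposed of correctly using the oddness of $H$. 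The one assertion you should not leave bare is that $F$ is transcendental: ``because $H$ is'' is not automatic for the combination $-tH+(1-t^{2})H'$, and transcendence of $F$ is genuinely needed for (ii). It is true: if $F$ were a polynomial $P$, then $H$ would be an entire solution of $(1-t^{2})y'-ty=P$, and the coefficient recursion $(n+1)a_{n+1}=na_{n-1}+p_n$ forces the Taylor coefficients to decay only like $n^{-1/2}$ along a parity class unless they eventually vanish, so $H$ would be a polynomial, a contradiction. This costs only a sentence, and to be fair the paper leaves the analogous fact (that $k$ has essential singularities at $0$ and $\infty$, so that Picard's big theorem applies) equally implicit.
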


\begin{proof} $(i)$ and $(iii)$ follow from [\cite{Qiao}, Theorem 1] and Lemma \ref{l1} respectively.

\medskip

  Define $$k(w)=\frac{w^2+1}{2w}H\left(\frac{w^{2}-1}{2iw}\right).$$ Then $k\in \mathcal{H}(\mathbb{C}^{*})$ with essential singularities at $0$ and $\infty$, and $$H_a(z)=\left(k(w)+a\left(\frac{w^2+1}{w}\right)\right)\circ e^{iz}.$$ Put $$H_{a}(z)=k_{a}(e^{iz}),$$ where $k_a(w)=k(w)+a(\frac{w^2+1}{w})$. Since $H^{'}_{a}(z)=k^{'}_a(e^{iz})\cdot i e^{iz}$, we have
\begin{equation}\label{eq:1a}
H^{'}_a(z)=0 \Leftrightarrow k^{'}_a(e^{iz})=0. 
\end{equation}
Thus  $k^{'}_a(w)=0$ if and only if 
$$\frac{w^{2}k^{'}(w)}{w^{2}-1}=-a.$$
By Picard's big theorem, we have 
\begin{equation}\label{eq:2a}
\#\left\{w\in \mathbb{C}^{*}: k^{'}_{a}(w)=0\right\}=\infty
\end{equation}
 for every $a\in \mathbb{C}$ with at most two exceptions.

 By (\ref{eq:1a}) and (\ref{eq:2a}), we get
 $$\#\left\{[z]: H^{'}_a(z)=0\right\}=\infty,$$
for every $a\in \mathbb{C}$ with at most two exceptions. This proves $(ii).$

\medskip

  To establish $(iv)$ it is enough to prove that $k^{'}_a$ has simple zeros (see (\ref{eq:1a})). Suppose that $k_{a}^{'}(w_0)=0$, $ k_{a}^{''}(w_0)=0$. Then 
 $$w_0(w^{2}_0-1)k^{''}(w_0)-2k^{'}(w_0)=0,~~~ a=\frac{-w^{2}_0 k^{'}(w_0)}{w^{2}_0-1}$$
$\it{Claim}$: $w(w^2-1)k^{''}(w)-2k^{'}(w)\not\equiv 0$ on $\mathbb{C}^{*}.$\\
Suppose that $w(w^2-1)k^{''}(w)-2k^{'}(w)\equiv 0$ on $\mathbb{C}^{*}$. Then 
$$k^{''}(w)=\frac{2}{w(w^2-1)}k^{'}(w),~~~w\in \mathbb{C}^{*}.$$
 Since $k^{'}(w)$ has no zeros at $w=\pm 1$, $k^{''}(w)$ has poles at $w=\pm 1$ and this is a contradiction. This establishes the claim. 

 Set $$E=\left\{a=\frac{-w^2k^{'}(w)}{w^2-1}: k^{''}_a(w)=0, w\neq 0\right\}.$$   
If $a\notin E$, then $\left\{w\in \mathbb{C}^{*}:k_{a}^{'}(w)=0, k_{a}^{''}(w)=0 \right\}=\phi.$ Therefore, $k^{'}_{a}$ has only simple zeros for $a\notin E$. This proves $(iv).$ 

  Adjoining the exceptions obtained in $(i), \ (ii)$ and $(iii)$ with $E$, the theorem holds for each $a\notin E$. 
\end{proof}


\begin{theorem}  \label{t3}Let $h\in \mathcal{E}_{T}$ such that $h(\pm 1)\neq 0.$ Put $H_a(z):=\cos{z}\cdot h(\sin{z})+a\sin{z}$, where $a\in \mathbb{C}$. Then the set $\{a\in \mathbb{C}: H_{a} \mbox{ is not prime in entire sense }\}$ is at most countable.
\end{theorem}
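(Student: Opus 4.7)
The plan is to argue by contradiction, in the spirit of the Urabe/Qiao approach to primeness of $2\pi$-periodic entire functions, with Lemma \ref{l2} supplying the key geometric input. Assume $H_a = f\circ g$ is a factorization in the entire sense with neither $f$ linear of degree one nor $g$ linear; I aim to show that the set of such exceptional $a$ is contained in a countable set, which may be taken to include the set $E$ produced by Lemma \ref{l2}. A preliminary growth argument reduces to the case where $g$ is transcendental: if $g$ were a polynomial of degree $\geq 2$, then $f$ would have to be transcendental (since $H_a$ is), and standard comparisons of $T(r,f\circ g)$ with the growth of $H_a$, via the Clunie--Hayman--Baker machinery, rule this out.

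The main step applies Lemma \ref{l2}. Since $f$ is non-linear it has a critical point $w_0$ with critical value $c:=f(w_0)$. Choosing $w_0$ to avoid the at most one Picard exceptional value of $g$ (possible whenever $f$ has several critical values; the remaining case, $f=\alpha(w-w_0)^{n}+\beta$ with $g$ omitting $w_0$, is addressed by a separate short argument), Picard's theorem guarantees that $g^{-1}(w_0)$ is infinite. Each $z\in g^{-1}(w_0)$ satisfies $H_a(z)=c$ and $H_a'(z)=0$, so by Lemma \ref{l2}, for $a\notin E$ any two such points share the values of $\sin$ and $\cos$ and hence lie in a single arithmetic progression $z_0+2\pi\mathbb{Z}$.

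The next task is to upgrade this to full $2\pi$-periodicity of $g$. Setting $\phi(z):=g(z+2\pi)-g(z)$, the identity $f(g(z+2\pi))=H_a(z+2\pi)=H_a(z)=f(g(z))$, combined with the arithmetic-progression restriction on $g^{-1}(w_0)$, should force $\phi\equiv 0$ via a Nevanlinna-type density argument. Once $g$ is $2\pi$-periodic, write $g(z)=G(e^{iz})$ with $G$ analytic on $\mathbb{C}^{*}$, and observe that $H_a(z)=k_a(e^{iz})$ where
\begin{equation*}
k_a(w)=\frac{w^{2}+1}{2w}\,h\!\left(\frac{w^{2}-1}{2iw}\right)+a\cdot\frac{w^{2}-1}{2iw},
\end{equation*}
so that $k_a=f\circ G$ on $\mathbb{C}^{*}$. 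The hypothesis $h(\pm 1)\neq 0$ ensures that $k_a$ has genuine essential singularities at both $0$ and $\infty$; comparing the singularity structures on the two sides of $k_a=f\circ G$, with $f$ non-linear, forces $f$ to be linear, the desired contradiction.

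I expect the crucial difficulty to lie in the middle step, passing from \emph{$g^{-1}(w_0)$ lies in a single arithmetic progression $z_0+2\pi\mathbb{Z}$} to \emph{$g$ is $2\pi$-periodic}: this will require a careful Nevanlinna--Ahlfors argument, possibly iterated over several critical values of $f$, together with a separate treatment when $f$ is transcendental (where $f$ still has infinitely many critical points, supplying the same structural input). Augmenting $E$ by the countably many additional exceptional values of $a$ encountered along the way preserves the overall countability conclusion.
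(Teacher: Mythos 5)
Your overall strategy (contradiction, Lemma \ref{l2} as the geometric input, reduction to a periodic structure for $g$, and a singularity clash at $0$) is the right flavour, but three steps are genuinely open, and two of them are the heart of the matter. First, you assert that a non-linear $f$ has a critical point, and that a transcendental $f$ has infinitely many critical points; neither is automatic (consider $f=e^{w}$). The paper obtains the existence of infinitely many zeros $\{t_k\}$ of $f'$ not from non-linearity but from the second fundamental theorem applied to $H_a'=f'(g)g'$, via the lower bound $\overline{N}(r,0,H_a')\geq t\,m(r,h_a'(e^{iz}))$ on a set of $r$ of infinite measure; without such an estimate your argument has no starting point when $f$ is transcendental, and the case where $f$ is a polynomial with a single critical value that $g$ omits (your ``separate short argument'') is exactly the case the paper kills with the companion estimate $\overline{N}(r,c,H_a)\geq t\,m(r,H_a)$ together with Renyi's theorem \cite{renyi}. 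Second, and most seriously, your middle step --- passing from ``$g^{-1}(w_0)$ lies in a single arithmetic progression $z_0+2\pi\mathbb{Z}$'' to ``$g$ is $2\pi$-periodic'' --- cannot work with a single critical value: there are many non-periodic transcendental entire functions whose $w_0$-points all lie in $2\pi\mathbb{Z}$. The paper needs the full sequence $\{t_k\}$: for each $k$ the roots of $g(z)=t_k$ lie on one line $l_k$, the family $\{l_k\}$ must be infinite (else Edrei's theorem \cite{edrei} forces $g$ to be a quadratic polynomial), and only then does Kobayashi's theorem \cite{kobayashi} yield $g(z)=P(e^{Az})$ with $P$ quadratic, after which Lemma \ref{l2} pins down $A=i/N$ and the identity $h_a(w^N)=f(P(w))$ produces the essential-singularity contradiction at $w=0$. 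No ``Nevanlinna--Ahlfors density argument'' on $\phi(z)=g(z+2\pi)-g(z)$ is known to substitute for this.

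Third, your preliminary reduction dismissing ``$g$ a polynomial of degree $\geq 2$'' by growth comparison is not valid: if $f$ is transcendental and $g$ is a polynomial, $f\circ g$ can match the growth of $H_a$ for suitable $f$, so Clunie--Baker-type estimates do not exclude this case. The paper instead uses Renyi's theorem to force $g$ quadratic, Lemma \ref{l2} to confine all zeros of $f'(g(z))$ to a single line, and then contradicts the lower bound $\overline{N}(r,0,H_a')\geq t\,m(r,h_a'(e^{iz}))$ by the counting estimate $N(r,0,H_a')\leq N(r,0,f'(g))+O(\log r)=o(m(r,h'(e^{iz})))$. In short: the skeleton is recognizable, but the three load-bearing tools (the second fundamental theorem lower bounds, Edrei's theorem, and Kobayashi's characterization of $P(e^{Az})$) are all missing, and each one is needed.
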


\begin{remark} Theorem \ref{t3} generalizes  Theorem 1 of Urabe \cite{urabe}.
\end{remark}
\textbf{Proof of theorem \ref{t3}}. Define $$h_a(w):=\left(\frac{w^2+1}{2w}\right)h\left(\frac{w^2-1}{2iw}\right)+a\left(\frac{w^2-1}{2iw}\right).$$ Then $h_a\in \mathcal{H}(\mathbb{C}^{*})$ with essential singularities at $0$ and $\infty.$ Let $H_a(z)=h_{a}(e^{iz})$. We can choose a countable subset $E$ of complex plane for which the assertion of Lemma \ref{l2} holds with respect to  $H_{a}$. We may assume $0\in E$. 
\medskip
By Second fundamental theorem of Nevanlinna [\cite{Hayman-1}, Theorem 2.3, p.43], we can choose $t\in (0,1)$ such that the inequalities 
\begin{equation}\label{eq:1}
\overline{N}(r,0,H^{'}_{a})\geq tm(r,h^{'}_a(e^{iz}))
\end{equation}
 and
\begin{equation}\label{eq:2}
\overline{N}(r,c,H_{a})\geq tm(r,H_a(z))
\end{equation} 
hold on a set of $r$ of infinite measure for any $c\in\mathbb{C}$.

\medskip

Suppose $H_{a}(z)=f(g(z))$, we consider the following cases one by one:

\medskip

{\it Case (i):}  When $f,\ g \in \mathcal{E}_{T}$.\\
 Since $H_{a}^{'}(z)=f^{'}(g(z))g^{'}(z)$, by (\ref{eq:1}) we find that $f^{'}$ has infinitely many zeros $\{t_{k}\}$, say. Since every solution of $g(z)=t_k$ is also a solution of the simultaneous equations
	$$H_{a}(z)=f(t_k),\ H_{a}^{'}(z)=0,$$
	by Lemma \ref{l2} it follows that all the roots of $g(z)=t_{k}$  lie on a  single straight line $l_n$. The set $\left\{l_{n}:n\in \mathbb{N}\right\}$ is infinite otherwise by Edrei's theorem \cite{edrei}, $g$ would reduce to a polynomial (of degree 2) which is not in reason. Therefore, by Theorem 3 of Kobayashi \cite{kobayashi}, we have

\begin{equation}\label{eq:4}	
		g(z)=P(e^{Az}),
\end{equation}
where $P$ is a quadratic polynomial and $A$  is a non-zero constant. Let $\left\{z_{k,j}\right\}_{j=1}^{\infty}$ be the roots of $g(z)=t_k$. Then $\left\{z_{k,j}\right\}_{j=1}^{\infty}$ are also the common roots of the simultaneous equations $$H_{a}(z)=f(t_{k}), \ H^{'}_{a}(z)=0.$$
By Lemma \ref{l2}, we have $$\cos z_{k,i}=\cos z_{k,j} \mbox{ and } \sin z_{k,i}=\sin z_{k,j}.$$ 
This implies that 
$$z_{k,i}=z_{k,j}+2\pi m_0,\mbox{ for some }m_0\in \mathbb{Z}.$$
By (\ref{eq:4}), $g$ is a periodic function with period $2\pi i/lA$, where $l=1 $ or $2$. Thus, we have $A=i/N$ for the integer $N=lm_0 $, and  $$H_a(z)=h_a(e^{iz})=f(P(e^{iz/N})).$$
Put $w=e^{iz/N}$. Then 
\begin{equation}\label{eq:E1}
h_a(w^{N})=f(P(w))\mbox{ for all } w\neq 0. 
\end{equation}
Note that the left hand side of (\ref{eq:E1}) has an essential singularity at $0$ but the right hand side is holomorphic at $0.$ This contradiction shows that {\it Case(i)} can't occur.

\medskip

 {\it Case (ii):} When $f\in \mathcal{E}_{T}$ and $g$ is a polynomial of degree at least two.\\
 By (\ref{eq:1}), $f^{'}$ has infinitely many zeros $\left\{t_k\right\}$, say. Let $p_k$ and $q_k$ be two roots of $g(z)=t_k$. Then $p_k$ and $q_k$ are also common roots of the simultaneous equations 
\begin{equation}\label{eq:3}
H_a(z)=f(t_k), \ H^{'}_a(z)=0.
\end{equation}
 By Lemma \ref{l2}, it follows that 
\begin{equation}\label{eq:41}
p_k-q_k=2m_k\pi,
\end{equation}
 for some integer $m_k$. Further, by Renyi's theorem \cite{renyi}, $g(z)=bz^2+cz+d$ for some $b\neq 0, c, d\in \mathbb{C}$ and hence 
\begin{equation}\label{eq:42}
p_k+q_k=-\frac{c}{b}.
\end{equation}
Now by (\ref{eq:41}) and (\ref{eq:42}), it follows that all $p_k$ and $q_k$ lie on a single straight line (independent of $t_k$, $k\in \mathbb{N}$).

 Since $H_{a}^{'}$ is periodic, we have $$N(r,0,H^{'}_{a})\leq N(r,0,f^{'}(g))+N(r,0,g^{'})$$
$$=N(r,0,f^{'}(g))+O(\log r)=o(m(r,h^{'}(e^{iz}))).$$ This contradicts (\ref{eq:1}) showing that {\it Case(ii)} is not possible.

\medskip

{\it Case (iii):} When $f$ is polynomial of degree $d$ $(\geq 2)$ and $g\in \mathcal{E}_{T}$.\\ 
By Renyi's Theorem \cite{renyi}, $g$ is periodic and therefore, we can express $g$ as $$g(z)=G(e^{Bz}),$$ $G\in \mathcal{H}(\mathbb{C}^{*})$ with an essential singularity at $0$ or $\infty$ and $B$ is a non-zero constant. Let $w_0$ be a zero of $f^{'}$. Then $G(z)=w_0$ has at most finitely many roots and so by Picard's big theorem it follows that  $f^{'}$ has exactly one zero, say $w_0.$ Thus we can express $f^{\prime}$ as 
 $$f^{'}(w)=b(w-w_0)^{d-1} \mbox{ and  hence } f(w)=\alpha (w-w_0)^{d}+c$$ for some constants $\alpha (\neq 0)$ and $c$. Therefore, $$H_a(z)=\alpha \(g(z)-w_0\)^d+c.$$ Hence $N(r,c,H_a)=dN(r,w_0,g).$ Since $G(w)=w_0$ has at most finitely many roots, $N(r,w_0,g)=o(m(r,H_a(z)))$ which is contrary to (\ref{eq:2}) showing that {\it Case(iii)} also fails to occur. 

\medskip

Hence $H_a(z)$ is prime in entire sense.	$\qed$

\medskip

 Using Theorem \ref{t3} and Lemma \ref{l2} and following the proofs of $(ii)$ and $(iv)$ in Theorem \ref{t1}, we obtain:
\begin{theorem} \label{t4} Let $h\in \mathcal{E}_{T}$ such that $h(\pm 1)\neq 0$. Put $H_a(z):=\cos{z} \cdot h(\sin{z})+a\sin{z}$, where $a\in \mathbb{C}$. Then there exists a countable set $E\subset \mathbb{C}$ such that $H_a$ possesses  the following properties for each $a\notin E$:
\begin{itemize} 
\item[(i)] $H_a$ is prime in entire sense;
\item[(ii)] $\#\left\{[z]: H^{'}_a(z)=0\right\}=\infty$;
\item[(iii)] for any $z_1, z_2 \in \left\{z\in \mathbb{C}: H_a(z)=c, H^{'}_a(z)=0 \right\}$, $~~\cos z_1=\cos z_2 \mbox{ and }~~\sin z_1=\sin z_2$ for any $c\in \mathbb{C}$; and
\item[(iv)] $H^{'}_a$ has only simple zeros.
\end{itemize}
\end{theorem}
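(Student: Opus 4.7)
The plan is to treat the four assertions in turn, using Theorem \ref{t3} for (i), Lemma \ref{l2} for (iii), and mimicking the change-of-variable argument from the proof of Theorem \ref{t1} for (ii) and (iv). Explicitly, part (i) is just the content of Theorem \ref{t3}, and part (iii) is exactly what Lemma \ref{l2} delivers once the enlarged exceptional set (\ref{eq:31}) is used; so the exceptional set $E$ of the present theorem will be the union of the exceptional set produced by Lemma \ref{l2}, the (at most two) Picard-exceptions arising in (ii), and the countable set arising in (iv).

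For (ii) and (iv) I would substitute $w=e^{iz}$ to represent $H_a$ on $\mathbb{C}^*$. Writing $\cos z = (w^2+1)/(2w)$ and $\sin z = (w^2-1)/(2iw)$, one has $H_a(z)=h_a(e^{iz})$ with
\[
h_a(w)=k(w)+a\beta(w),\quad k(w):=\frac{w^2+1}{2w}\,h\!\left(\frac{w^2-1}{2iw}\right),\quad \beta(w):=\frac{w^2-1}{2iw},
\]
and $h_a\in\mathcal{H}(\mathbb{C}^*)$ with essential singularities at $0$ and $\infty$ (inherited from the transcendental $h$). Because $H_a'(z)=ie^{iz}h_a'(e^{iz})$, we have $H_a'(z)=0$ if and only if $h_a'(w)=0$ at $w=e^{iz}$. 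For (ii), solving $h_a'(w)=0$ gives $a=-k'(w)/\beta'(w)$, a meromorphic function on $\mathbb{C}^*$ with essential singularities at $0$ and $\infty$, so by Picard's big theorem every value of $a$ except at most two is attained infinitely often; these infinitely many preimages of $a$ lift under $e^{iz}$ to infinitely many $[z]$-classes of zeros of $H_a'$.

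For (iv), $H_a'$ has a multiple zero at $z_0$ precisely when $h_a'(w_0)=h_a''(w_0)=0$ at $w_0=e^{iz_0}$, which is equivalent to $\Phi(w_0):=k'(w_0)\beta''(w_0)-k''(w_0)\beta'(w_0)=0$ together with $a=-k'(w_0)/\beta'(w_0)$. The key claim, playing the role of the claim in the proof of Theorem \ref{t1}, is that $\Phi\not\equiv 0$ on $\mathbb{C}^*$. If $\Phi\equiv 0$ then $(k'/\beta')'\equiv 0$ on the open set where $\beta'\neq 0$, so $k'=c\beta'$ and therefore $k=c\beta+d$ on $\mathbb{C}^*$ for some constants $c,d$. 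Pulling this back through $w=e^{iz}$ yields the functional identity
\[
\cos z\cdot h(\sin z)=c\sin z+d\qquad (z\in\mathbb{C}).
\]
Evaluating at $z=\pi/2$ and $z=-\pi/2$ forces $c=d=0$, so $\cos z\cdot h(\sin z)\equiv 0$; since $\sin$ is surjective onto $\mathbb{C}$, this gives $h\equiv 0$, contradicting $h\in\mathcal{E}_T$. Hence $\Phi$ has only a discrete zero set, the set of bad $a$-values $\{-k'(w)/\beta'(w):\Phi(w)=0\}$ is at most countable, and adjoining it to $E$ secures (iv).

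The only genuinely new content is the identity-zero claim for $\Phi$ in (iv); the rest is a routine combination of Picard's theorem and the results already proved. I expect the (iv) claim to be the main obstacle, but the integration trick plus the evaluations at $z=\pm\pi/2$ using the transcendence of $h$ resolves it cleanly.
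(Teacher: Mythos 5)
Your proposal is correct and follows essentially the same route as the paper, which simply invokes Theorem \ref{t3} for (i), Lemma \ref{l2} for (iii), and says to repeat the $w=e^{iz}$ arguments from parts (ii) and (iv) of Theorem \ref{t1}; you have filled in exactly those details. The one small variation is in the key claim of (iv): the paper's template argument deduces $\Phi\not\equiv 0$ from the fact that $k''=(\beta''/\beta')k'$ would force poles of $k''$ at the zeros of $\beta'$ (here $w=\pm i$, where $k'(\pm i)=h(\pm 1)\neq 0$), whereas you integrate $k'=c\beta'$ to $k=c\beta+d$ and evaluate at $z=\pm\pi/2$ — both are valid and equally short.
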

\section{Dynamics of non-linear entire functions permutable with members of subclasses of $\mathcal{F}$}
The main result in this section is obtained by utilizing the argument due to Y. Noda \cite{noda}, faithfully, with certain modifications.

 Let $D_0:=D\setminus \left\{z: f^{'}(z)g^{'}(z)=0 \right\}$. For $f, g \in \mathcal{H}(D)$, define a relation on $D_0$ with respect to $f$ and $g$, denoted by $\sim_{\left(f,g\right)} $ as follows:

 Let $z, w \in D_0$. We write $z \sim_{\left(f,g\right)} w$ if and only if $f(z)=f(w), g(z)=g(w)$ and there are neighborhoods $U_z$ and $U_w$ of $z,$ and  $w$ respectively such that $f(U_z)=f(U_w)$, $g(U_z)=g(U_w)$ and $\left(f_{|U_w}\right)^{-1}of_{|U_z}=\left(g_{|U_w}\right)^{-1}\circ g_{|U_z}$ in $U_z$. Then $\sim_{\left(f,g\right)} $ is an equivalence relation on $D_0.$ 

\indent
  	
Y. Noda [\cite{noda}, Lemma 2.1] proved that for $f, \ g \in \mathcal{H}(D)$ and $z_0 \in \mathbb{C}$, there exist a neighborhood $N_{z_0}$ of $z_0, \ h\in \mathcal{H}(N_{z_0})$ and $\phi, \ \psi \in \mathcal{H}(h(N_{z_0}))$ satisfying  
\begin{enumerate}
	\item $f^{'}(z)\neq 0, g^{'}(z)\neq 0, h^{'}(z)\neq 0$  $z\in N_{z_0}\setminus \{z_0\};$
	\item $z\sim_{\left(f,g\right)}w$ if and only if $h(z)=h(w)$  $z,w \in N_{z_0}\setminus \{z_0\};$ and
	\item $f=\phi \circ h, g=\psi \circ h.$
\end{enumerate}
This information lead Y. Noda \cite{noda} to extend the above equivalence relation to $D$ as follows:
\indent  Let $z, w\in D$. We write $z\sim_{\left(f,g\right)}w$ if and only if $f(z)=f(w), g(z)=g(w)$ and there exists a conformal map $\phi$ defined in a neighborhood of $h_1(z)$ such that $\phi(h_1(z))=h_2(w)$, $\phi_1=\phi_2\circ\phi$, $\psi_1=\psi_2\circ\phi ,$ where $h_j, \phi_j, \psi_j (j=1,2)$ satisfy the conclusions of Lemma $2.1$ of Noda\cite{noda}( mentioned in the preceding discussion).

\medskip

Using this equivalence relation, Y. Noda\cite{noda}  proved the existence of the greatest common right factor of entire functions:
\begin{lemma}[\cite{noda}, p.5]\label{rightfactor} Let $f, \ g \in \mathcal{H}(\mathbb{C}).$  Then there exits $F \in \mathcal{H}(\mathbb{C})$ and $\phi, \ \psi \in \mathcal{H}(F(\mathbb{C}))$  such that $f=\phi \circ F$ and $g=\psi \circ F.$ $F(z)=F(w)$ if and only if $z\sim_{\left(f,g\right)} w.$
\end{lemma}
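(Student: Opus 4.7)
The plan is to construct $F$ as the quotient map onto the Riemann surface $\mathcal{R}:=\mathbb{C}/\sim_{(f,g)}$, and then to realize $\mathcal{R}$ as a domain in $\mathbb{C}$ by uniformization, at which point $F$, $\phi$ and $\psi$ essentially write themselves.

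First I would invoke Noda's local Lemma $2.1$ to obtain, for each $z_0\in\mathbb{C}$, a neighborhood $N_{z_0}$ together with holomorphic maps $h_{z_0},\phi_{z_0},\psi_{z_0}$ satisfying $f=\phi_{z_0}\circ h_{z_0}$ and $g=\psi_{z_0}\circ h_{z_0}$ on $N_{z_0}$, with the fibers of $h_{z_0}$ recovering the local $\sim_{(f,g)}$-classes. On any nonempty overlap $N_{z_0}\cap N_{z_1}$ I would show that $h_{z_0}$ and $h_{z_1}$ differ by a locally defined biholomorphism: this is the essential uniqueness of the Noda factorization, and the resulting transition map is precisely the conformal map $\phi$ appearing in the extended definition of $\sim_{(f,g)}$ on all of $\mathbb{C}$.

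Next I would assemble these local data into a global object. The overlap biholomorphisms endow the quotient set $\mathcal{R}$ with a Riemann surface structure, with the local $h_{z_0}$'s descending to charts; the canonical projection $\pi\colon\mathbb{C}\to\mathcal{R}$ is then holomorphic. By the same uniqueness principle, $\phi_{z_0}$ and $\psi_{z_0}$ agree on overlaps, so they glue to globally defined holomorphic maps $\Phi,\Psi\colon\mathcal{R}\to\mathbb{C}$ with $f=\Phi\circ\pi$ and $g=\Psi\circ\pi$.

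Finally I would realize $\mathcal{R}$ as an open subset of $\mathbb{C}$. Since $\pi$ is a nonconstant holomorphic map from the simply connected surface $\mathbb{C}$ onto $\mathcal{R}$, the surface $\mathcal{R}$ has genus zero and is therefore conformally equivalent to a subdomain of the Riemann sphere $\mathbb{P}^1$. The nonconstant holomorphic function $\Phi$ on $\mathcal{R}$ rules out $\mathcal{R}\cong\mathbb{P}^1$, so, after a M\"obius change of coordinates if necessary, $\mathcal{R}$ embeds into $\mathbb{C}$ via a biholomorphism $\iota$. Setting $F:=\iota\circ\pi$, $\phi:=\Phi\circ\iota^{-1}$, and $\psi:=\Psi\circ\iota^{-1}$ produces $F\in\mathcal{H}(\mathbb{C})$ with $F(\mathbb{C})=\iota(\mathcal{R})$, $\phi,\psi\in\mathcal{H}(F(\mathbb{C}))$, the identities $f=\phi\circ F$ and $g=\psi\circ F$, and the characterization $F(z)=F(w)\iff z\sim_{(f,g)}w$.

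The main obstacle is verifying that $\mathcal{R}$ is Hausdorff. A naive quotient by the relation ``$f(z)=f(w)$ and $g(z)=g(w)$'' generally fails to be Hausdorff, and it is precisely the compatibility clauses $\phi_1=\phi_2\circ\phi$ and $\psi_1=\psi_2\circ\phi$ in the extended definition of $\sim_{(f,g)}$ that force non-equivalent points to be separated by disjoint coordinate charts. Once this separation property is in hand, the embedding $\iota$ is routine, since any genus-zero Riemann surface admitting a nonconstant holomorphic function is conformally a plane domain.
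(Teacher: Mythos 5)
This lemma is not proved in the paper at all: it is imported verbatim from Noda \cite{noda} (p.~5), so there is no in-paper argument to compare yours against. Your plan --- build the quotient $\mathcal{R}=\mathbb{C}/\sim_{(f,g)}$, give it a Riemann surface structure using the local factorizations $f=\phi_{z_0}\circ h_{z_0}$, $g=\psi_{z_0}\circ h_{z_0}$ from Noda's Lemma~2.1, glue the local data, and uniformize --- is the natural route and is essentially how the greatest common right factor is constructed in the source. Two points, however, need repair before this counts as a proof.

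First, the uniformization step contains a false intermediate claim: a nonconstant holomorphic surjection from $\mathbb{C}$ does \emph{not} force the target to have genus zero (the Weierstrass $\wp$-function maps $\mathbb{C}$ onto a torus). The correct argument classifies $\mathcal{R}$ by its universal cover: since $\mathbb{C}$ is simply connected, $\pi$ lifts to the universal cover of $\mathcal{R}$, and Liouville rules out the disk; hence $\mathcal{R}$ is one of $\mathbb{P}^1$, $\mathbb{C}$, $\mathbb{C}^*$, or a torus, and the existence of the nonconstant holomorphic function $\Phi$ (with $\Phi\circ\pi=f$ nonconstant) eliminates the two compact cases. This yields $F(\mathbb{C})=\mathbb{C}$ or $\mathbb{C}\setminus\{pt\}$ directly --- which is exactly the dichotomy the main theorem's proof later exploits when it writes $F=c+e^{Q}$ --- without any appeal to ``genus zero implies planar.'' Second, you correctly identify the Hausdorff property of $\mathcal{R}$ (equivalently, that the local charts induced by the $h_{z_0}$ are compatible and separate inequivalent points) as the main obstacle, but you only assert it. This is where the actual content lies: one must show that if $z_n\to z$, $w_n\to w$ and $z_n\sim_{(f,g)}w_n$, then the conformal maps witnessing $h_1(z_n)\mapsto h_2(w_n)$ coalesce, by the identity theorem applied to the relations $\phi_1=\phi_2\circ\phi$ and $\psi_1=\psi_2\circ\phi$, into a single conformal map witnessing $z\sim_{(f,g)}w$; one must likewise verify that on a sufficiently small $N_{z_0}$ the global relation agrees with the fibers of $h_{z_0}$, so that $\pi|_{N_{z_0}}$ really is modelled on $h_{z_0}$. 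Until those verifications are written out, the proposal is a correct skeleton rather than a proof.
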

The entire function $F$ in Lemma \ref{rightfactor} is called the {\it greatest common right factor } of $f$ and $g$ (for a more general definition one may refer to [\cite{noda}, p.2]).

\medskip

We also require the following key lemmas for proving our result in this section:

\begin{lemma}[\cite{baker}, Satz 6] \label{baker0} Let $f\in \mathcal{E}_{T} $ such that $f$ permutes with a polynomial $g$. Then $g(z)=\omega z+\beta$ $\left(\omega=\exp{2\pi ik / p}, \ k,p\in\mathbb{N}, (k,p)=1\right)$. Further, if $\omega\neq 1$, then $f(z)=c+(z-c)F_0\left((z-c)^p\right),$ where $c=\beta/{(1-\omega)}$ and $F_0$ is an entire function.
\end{lemma}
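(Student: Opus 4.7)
The plan is to proceed in three stages: first show that $g$ must be linear, then that its leading coefficient $\omega$ is a root of unity, and finally read off the form of $f$ from the resulting functional equation. For the first stage, assume $d := \deg g \geq 2$. The Valiron--Mohon'ko identity gives $T(r, g \circ f) = d\,T(r,f) + O(1)$, while the standard composition estimate for polynomials yields $T(r, f \circ g) = (1+o(1))\,T(r^d, f)$ as $r \to \infty$. The commutation $f \circ g = g \circ f$ therefore forces $T(r^d, f) \sim d\,T(r,f)$. Setting $\phi(s) := T(e^s, f)$ and iterating, $\phi(d^n s_0) \sim d^n \phi(s_0)$, so $\phi$ grows only linearly along a geometric sequence. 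But $f$ transcendental implies $T(r,f)/\log r \to \infty$, hence $\phi(s)/s \to \infty$, a contradiction. Thus $d = 1$ and we may write $g(z) = \omega z + \beta$.

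For the second stage, if $\omega = 1$ take $p = 1$ and we are done. Otherwise $g$ has a unique fixed point $c = \beta/(1-\omega)$; after conjugating by $z \mapsto z + c$ we may assume $c = 0$, so the commutation becomes $f(\omega z) = \omega f(z)$. Iterating, $f(\omega^n z) = \omega^n f(z)$ for every $n \in \mathbb{N}$. If $\omega$ were not a root of unity, the orbit $\{\omega^n\}$ would be dense in the unit circle, and continuity would upgrade the identity to $f(\lambda z) = \lambda f(z)$ for every $|\lambda| = 1$. Comparing Taylor coefficients $f(z) = \sum_{n \geq 0} a_n z^n$ then forces $\lambda^n a_n = \lambda a_n$ for all $|\lambda| = 1$, i.e.\ $a_n = 0$ for $n \neq 1$, making $f$ linear, contradicting $f \in \mathcal{E}_T$. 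Hence $\omega = \exp(2\pi i k/p)$ for some coprime $k,p \in \mathbb{N}$.

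For the third stage, with $c = 0$ and $\omega$ a primitive $p$-th root of unity, the identity $\omega^n a_n = \omega a_n$ now forces $a_n = 0$ unless $\omega^{n-1} = 1$, i.e.\ unless $p \mid n - 1$. Only the indices $n = 1, p+1, 2p+1, \ldots$ survive, and collecting them gives
\begin{equation*}
f(z) = z \sum_{j \geq 0} a_{jp+1} z^{jp} = z\, F_0(z^p),
\end{equation*}
where $F_0(w) := \sum_{j \geq 0} a_{jp+1} w^j$ is entire because $f$ is. Undoing the conjugation $z \mapsto z - c$ yields the stated form $f(z) = c + (z-c) F_0\bigl((z-c)^p\bigr)$.

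The main obstacle is Stage~1: making the growth comparison decisive without hand-waving. The order-comparison sketch is clean when $f$ has finite positive order (one simply compares orders $d\rho = \rho$), but the zero-order case demands a more delicate Polya/Valiron-type inequality to compare $T(r^d, f)$ with $d\,T(r,f)$ uniformly in $r$. Baker's original argument blends Nevanlinna's second main theorem with a careful analysis of the branches of $g^{-1}$, and reconstructing this rigorously, rather than appealing to rough growth heuristics, is where the real content of the lemma sits.
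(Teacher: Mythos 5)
The paper itself offers no proof of this lemma --- it is quoted verbatim from Baker's 1958 paper as Satz 6 --- so the only meaningful comparison is with Baker's original argument. Your Stages 2 and 3 are correct and complete: once $g(z)=\omega z+\beta$ is known to be linear, conjugating to the fixed point $c=\beta/(1-\omega)$, extracting $a_n(\omega^{n-1}-1)=0$ from the Taylor coefficients (the detour through density of $\{\omega^n\}$ is not even needed for the root-of-unity dichotomy, since $f(\omega z)=\omega f(z)$ alone gives the coefficient relation), and keeping only the indices $n\equiv 1 \pmod p$ is exactly the classical route to $f(z)=c+(z-c)F_0((z-c)^p)$.

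Stage 1, however, contains a genuine gap, and it is not merely the sharpening you flag at the end. Even granting the sharp asymptotic $T(r,f\circ g)=(1+o(1))\,T(cr^{d},f)$, the resulting relation $\phi(ds)\le d\,(1+\epsilon(s))\,\phi(s)$ with $\epsilon(s)\to 0$ does not self-improve to $\phi(d^{n}s_{0})\le C\,d^{n}\phi(s_{0})$: the multiplicative errors compound as $\prod_{j}\bigl(1+\epsilon(d^{j}s_{0})\bigr)$, which need not be bounded. Concretely, a transcendental entire $f$ of minimal growth with $T(r,f)\sim \log r\cdot\log\log r$ satisfies $T(r^{d},f)=d\,(1+o(1))\,T(r,f)$ identically, so your comparison yields no contradiction at all for such $f$; the inequality you derive is simply consistent with their existence. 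The missing idea is to iterate $g$ rather than the inequality in $r$: from $f\circ g^{n}=g^{n}\circ f$ one gets, for a \emph{fixed} large $r$ and $R_{n}:=c\,M(r/2,g^{n})\ge (c'r)^{d^{n}}$, the chain $\log M(R_{n},f)\le \log M(r,f\circ g^{n})=\log M(r,g^{n}\circ f)\le \log M(M(r,f),g^{n})\le d^{n}\bigl(\log M(r,f)+\log C\bigr)$, using Polya's composition inequality on the left and the trivial polynomial bound on the right. Since $\log R_{n}\ge d^{n}\log(c'r)$ with $\log(c'r)>0$, the ratio $\log M(R_{n},f)/\log R_{n}$ stays bounded along the sequence $R_{n}\to\infty$, contradicting transcendence of $f$ outright --- with no sharp constants, no second main theorem, and no order dichotomy required. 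That is essentially Baker's argument, and without it (or a substitute) your Stage 1 does not close.
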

\begin{lemma}[\cite{baker}, Satz 7]\label{baker} Let $f$ and $g$ be permutable entire functions. Then there exist a positive integer $n$ and $R_0>0$ such that $M\left(r,g\right)<M\left(r,f^n\right)$ for all $r>R_0$.
\end{lemma}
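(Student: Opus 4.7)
The plan is to reduce the problem using Lemma \ref{baker0} and then analyze the remaining cases via classical growth comparisons. First I would observe that if $f$ is a nonlinear polynomial and $g \in \mathcal{E}_{T}$, then applying Lemma \ref{baker0} to the permutable pair $(g,f)$ (with $g$ transcendental and $f$ polynomial) forces $f$ to be linear, a contradiction; hence, combined with the symmetric case, one may assume that either both $f$ and $g$ are nonlinear polynomials or both are transcendental. The polynomial/polynomial case is elementary: $\deg(f^{n}) = (\deg f)^{n}\to\infty$ with $n$, so eventually exceeds $\deg g$, giving $M(r,g)<M(r,f^{n})$ for all large $r$. If $g$ is a polynomial and $f\in\mathcal{E}_{T}$, Lemma \ref{baker0} forces $g(z)=\omega z+\beta$, so $M(r,g)=O(r)$ while transcendence of $f$ yields $\log M(r,f)/\log r\to\infty$; hence $n=1$ suffices.

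The substantive case is when both $f,g \in \mathcal{E}_{T}$. Here I would combine the elementary upper bound
$$M(r, g \circ f^{n}) \leq M\bigl(M(r, f^{n}),\, g\bigr),$$
coming from the maximum modulus principle, with a Polya/Clunie-type lower bound of the form
$$M(r, f^{n} \circ g) \geq M\bigl(c\, M(r/2, g),\, f^{n}\bigr)$$
valid for some absolute $c > 0$ along a suitable sequence of radii tending to infinity, obtained by locating a point on $|z|=r/2$ where $|g|$ is nearly maximal and applying a standard covering/minimum-modulus estimate on a disk about it. Permutability $f^{n}\circ g = g\circ f^{n}$ then yields the functional inequality
$$M\bigl(c\, M(r/2, g),\, f^{n}\bigr) \leq M\bigl(M(r, f^{n}),\, g\bigr).$$

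The main obstacle will be extracting from this functional inequality the desired \emph{uniform} pointwise comparison $M(r,g)<M(r,f^{n})$ for all $r>R_{0}$, as opposed to a comparison only along a sequence. I expect this to be handled by exploiting the super-exponential growth of iterates of a transcendental entire function: since $f\in\mathcal{E}_{T}$, $\log M(r,f)/\log r\to\infty$, so iterating shows that $M(r,f^{n})$ dominates any fixed entire majorant once $n$ is chosen large enough relative to the order or type of $g$. The monotonicity of $M(\cdot,f^{n})$, together with the functional inequality above, then propagates this strict domination uniformly in $r$ and produces the desired constants $n$ and $R_{0}$.
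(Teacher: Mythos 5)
The paper gives no proof of this lemma: it is quoted as Satz~7 of Baker \cite{baker}, so the only thing to compare your argument with is Baker's original one. Your treatment of the polynomial cases via Lemma \ref{baker0} is fine as far as it goes (the case of linear $f$, for which the statement actually fails for general $g$, is implicitly excluded by the way the lemma is used), but the case that carries all of the content --- $f$ and $g$ both transcendental --- is not proved, and the step you propose to close it with is false.

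Concretely, the closing assertion that ``$M(r,f^{n})$ dominates any fixed entire majorant once $n$ is chosen large enough relative to the order or type of $g$'' cannot be right: a function permutable with $f$ need not have finite order or type, and for an arbitrary entire $g$ no iterate of $f$ need dominate it. Writing $E$ for the map $r\mapsto e^{r}$ and taking $f(z)=e^{z}$, one has $M(r,f^{n})=E^{n}(r)$, while there exist entire functions $g$ with $M(r,g)\geq E^{\lfloor r\rfloor}(r)$ for all large $r$; such a $g$ outgrows every fixed iterate of $f$. The whole point of Baker's Satz~7 is that \emph{permutability} excludes such $g$, so the conclusion has to be extracted from the commutation relation itself, and the functional inequality you set up does not accomplish this: $M\bigl(c\,M(r/2,g),f^{n}\bigr)\leq M\bigl(M(r,f^{n}),g\bigr)$ bounds $f^{n}$ from above by $g$ (each evaluated at a large, and different, radius), which is oriented the wrong way for proving $M(r,g)<M(r,f^{n})$. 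One would at least need the P\'olya-type lower bound applied on the $g\circ f^{n}$ side, giving $M\bigl(c\,M(r/2,f^{n}),g\bigr)\leq M\bigl(M(r,g),f^{n}\bigr)$, and even then a genuine bootstrapping argument is required to convert a comparison at two different radii into the uniform comparison at a common radius $r>R_{0}$. As it stands, your proposal settles the degenerate cases and asserts, rather than proves, the main one.
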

\begin{lemma}[\cite{clunie},Theorem 1]\label{clunie1} Let $f,\ g\in \mathcal{E}_{T}$. Then $$\limsup_{r\to\infty}\frac{\log{M\left(r,f\circ g\right)}}{\log{M\left(r,g\right)}}=\infty .$$
\end{lemma}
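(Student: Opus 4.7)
This statement, quoted from Clunie, compares the growth of $f \circ g$ against that of $g$ alone. The plan combines two ingredients: the super-polynomial growth forced by transcendence of $f$, and a P\'olya-type covering lower bound for compositions.

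Since $f \in \mathcal{E}_T$, one has $\log M(R,f)/\log R \to \infty$ as $R \to \infty$; in particular, for every $N \in \mathbb{N}$ there exists $R_0(N)$ with $M(R,f) \geq R^N$ for all $R \geq R_0(N)$. This is the quantitative form of transcendental growth. Independently, since $g \in \mathcal{E}_T$, one has a P\'olya-type covering property: for any fixed $\epsilon \in (0,1)$, there is an unbounded set $\mathcal{R} \subset (0, \infty)$ such that for every $r \in \mathcal{R}$ the image $g(\{|z| \leq r\})$ contains a disk of radius at least $M(r,g)^{1-\epsilon}$. Applying the maximum modulus principle to $f$ on this disk gives, for $r \in \mathcal{R}$,
\begin{equation*}
M(r, f \circ g) \;\geq\; M\bigl( M(r,g)^{1-\epsilon},\, f \bigr).
\end{equation*}
Choose $r \in \mathcal{R}$ large enough that $M(r,g)^{1-\epsilon} \geq R_0(N)$ and combine the two inputs:
\begin{equation*}
\frac{\log M(r, f\circ g)}{\log M(r,g)} \;\geq\; \frac{\log M\bigl(M(r,g)^{1-\epsilon}, f\bigr)}{\log M(r,g)} \;\geq\; (1-\epsilon) N.
\end{equation*}
Since $N$ is arbitrary, letting $N \to \infty$ along a suitable sequence in $\mathcal{R}$ yields $\limsup_{r \to \infty} \log M(r, f \circ g)/\log M(r,g) = +\infty$.

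The substantive obstacle is the covering property for $g$. A naive appeal to Bloch's theorem applied to $g$ yields a disk of radius only comparable to $|g'(0)|$, which is far too small. The classical route is Wiman--Valiron theory: near a point $z^{\ast}$ of maximum modulus on $|z| = r$, $g$ is well approximated by its central monomial $a_{N(r)} z^{N(r)}$ of large degree $N(r) \to \infty$, so a small disk around $z^{\ast}$ is mapped by $g$ onto (approximately) an annulus whose outer radius is comparable to $M(r,g)$. Arguing that this image contains a genuine sub-disk of radius at least $M(r,g)^{1-\epsilon}$ for $r$ outside a thin exceptional set, and then invoking the maximum modulus principle for $f$ on this sub-disk, produces the desired inequality. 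Making all of this quantitative and handling the exceptional radii is where the real work lies; this is exactly the content of Clunie's original argument.
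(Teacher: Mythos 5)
The paper does not prove this lemma at all: it is quoted verbatim as Theorem~1 of Clunie's article and used as a black box, so there is no internal proof to compare against; your sketch must therefore stand on its own, and it has a genuine gap at its central step. From ``$g(\{|z|\le r\})$ contains a disk of radius $\rho = M(r,g)^{1-\epsilon}$'' you conclude $M(r,f\circ g)\ge M(\rho,f)$ ``by the maximum modulus principle''. That implication is false unless the disk is centred at (or at least surrounds) the origin: for a disk $D(w_0,\rho)$ with $|w_0|$ large compared with $\rho$ there is no lower bound for $\sup_{D(w_0,\rho)}|f|$ in terms of $M(\rho,f)$; for instance, with $f(w)=e^{-w}$ and $w_0$ a large positive real one gets $\sup_{D(w_0,\rho)}|f|=e^{\rho-w_0}\ll e^{\rho}=M(\rho,f)$. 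This is not a removable technicality in your setup, because the disk produced by the Wiman--Valiron picture lies inside an annulus at distance roughly $M(r,g)$ from the origin, i.e.\ exactly in the regime where the implication breaks down. The standard repair is structurally different: normalise so that $g(0)=0$ (replace $g$ by $g-g(0)$ and $f(w)$ by $f(w+g(0))$, which changes nothing essential) and invoke Bohr's covering theorem, equivalently P\'olya's lemma, which produces a full circle $\{|w|=R\}$ with $R\ge cM(r/2,g)$, centred at the origin, inside $g(\{|z|\le r\})$; on such a circle $\sup|f|$ equals $M(R,f)$ exactly, and $M(r,f\circ g)\ge M(cM(r/2,g),f)$ follows.

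Even after that repair the conclusion is not yet reached. P\'olya's inequality puts $M(r/2,g)$ inside $f$ while your denominator is $\log M(r,g)$, and for $g$ of infinite order (e.g.\ $g(z)=e^{e^z}$) the ratio $\log M(r,g)/\log M(r/2,g)$ tends to infinity, so the resulting lower bound $N\bigl(\log M(r/2,g)+\log c\bigr)/\log M(r,g)$ need not be large if one argues at every radius. One must select a sequence of radii along which this ratio is controlled --- and this is precisely why the theorem asserts only a $\limsup$ and not a limit (Clunie shows the corresponding $\liminf$ can be finite). Your sketch explicitly defers both the quantitative covering and the handling of exceptional radii to ``Clunie's original argument'', so what is written identifies the right two ingredients but establishes neither the key inequality nor the passage to the stated $\limsup$.
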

\begin{lemma}[\cite{noda}, Lemma 2.5]\label{noda} Suppose that $f, \ g\in  \mathcal{H}(\mathbb{C})$ are  permutable   and $\left(F,S\right)$ be a greatest common right  factor of $f$ and $g$. Let there be a subset $A\subset\mathbb{C}$ such that $\#f(A)=1$ and $\#g\left(A\right)=1$ and $r$ be the order of $f$ at some point of $g\(A\)$. Then there exists a subset $A'\subset A$ such that $\#F(A')=1$ and $\#A'\geq \# A/{r}$.
\end{lemma}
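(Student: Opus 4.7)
The statement is that $\#F(A)\le r$ (combined with pigeonhole). Indeed, if the image $F(A)$ has at most $r$ distinct points, then by the pigeonhole principle some value $v\in F(A)$ is attained by at least $\#A/r$ elements of $A$, and the set $A':=A\cap F^{-1}(v)$ satisfies $\#F(A')=1$ and $\#A'\ge \#A/r$. So the whole plan reduces to proving the cardinality bound $\#F(A)\le r$.

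Bookkeeping first. Let $w_0$ be the unique point of $g(A)$ and $c$ the unique point of $f(A)$. Permutability gives $f(w_0)=f(g(a))=g(f(a))=g(c)=:d$ for any $a\in A$. Writing $f=\phi\circ F$ and $g=\psi\circ F$ from the greatest-common-right-factor decomposition, every $a\in A$ satisfies $\phi(F(a))=c$ and $\psi(F(a))=w_0$, so $F(A)\subseteq \phi^{-1}(c)\cap\psi^{-1}(w_0)$. This pins $F(A)$ into a discrete set, but the crucial refinement must come from the local geometry at $w_0$.

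The heart of the argument is to apply Noda's Lemma 2.1 at the point $w_0$: this supplies a neighborhood $N$ of $w_0$, a function $h\in\mathcal{H}(N)$, and $\phi_0,\psi_0\in\mathcal{H}(h(N))$ with $f=\phi_0\circ h$ and $g=\psi_0\circ h$ on $N$, and with $z\sim_{(f,g)} z'$ (locally) if and only if $h(z)=h(z')$. Because $f=\phi_0\circ h$ has order $r$ at $w_0$, the multiplicity of $h$ at $w_0$ divides $r$. By the universal property of the global $F$ embedded in Lemma \ref{rightfactor}, the germ of $F$ at $w_0$ agrees with the local gcr-factor $h$ up to a conformal change of target. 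The key transfer step is then to use $g(a)=w_0$ for each $a\in A$ together with the permutability $f\circ g=g\circ f$ to push the local branched-cover structure of $h$ at $w_0$ forward along the $g$-germ to each $a\in A$; two points of $A$ that correspond to the same local sheet near $w_0$ are forced to be $\sim_{(f,g)}$-equivalent, hence identified by $F$. Therefore the number of distinct $F$-values on $A$ is bounded by the number of local sheets of $h$ at $w_0$, which is at most $r$.

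The delicate step—and where one must be most careful—is the transfer in the previous paragraph: converting the local multiplicity $r$ of $f$ at $w_0$ into a bound on global equivalence classes among elements of $A$ (whose locations may be far from $w_0$). One must reconcile Noda's local factorization at $w_0$ with the global factorization through $F$, use permutability to match germs at $a\in A$ with germs at $w_0$, and verify that distinct equivalence classes in $A$ really do correspond to distinct local sheets. Once $\#F(A)\le r$ is established, pigeonhole as in the first paragraph completes the proof.
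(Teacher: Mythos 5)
First, a point of comparison that matters here: the paper offers no proof of this lemma at all --- it is quoted verbatim from Noda's thesis (Lemma 2.5 there) and used as a black box, so there is no in-paper argument to measure your attempt against. Judged on its own terms, your reduction is the right frame: the stated conclusion is exactly the pigeonhole consequence of the bound $\#F(A)\le r$, and establishing that bound is the entire content of the lemma.

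However, your treatment of that bound is a statement of intent rather than a proof. The sentence ``two points of $A$ that correspond to the same local sheet near $w_0$ are forced to be $\sim_{(f,g)}$-equivalent'' is precisely the assertion that needs proving, and the proposal supplies no mechanism for it; you yourself flag it as ``the delicate step.'' The missing idea can be made concrete. Write $f(w)=f(w_0)+\rho(w)^r$ near $w_0$ with $\rho$ conformal and $\rho(w_0)=0$, and set $G:=\rho\circ g$ on $g^{-1}(N_{w_0})$, so that $G$ vanishes on $A$ and $G^r=f\circ g-f(w_0)=g\circ f-f(w_0)$ by permutability. For $a,b\in A$ away from critical points, the germ $\sigma_{ab}:=(f|_{U_b})^{-1}\circ f|_{U_a}$ satisfies $f\circ\sigma_{ab}=f$, hence $(G\circ\sigma_{ab})^r=G^r$, hence $G\circ\sigma_{ab}=\xi_{ab}\,G$ for some $r$-th root of unity $\xi_{ab}$; one then checks that $\xi_{ab}=1$ iff $g\circ\sigma_{ab}=g$ iff $a\sim_{(f,g)}b$, and that $\xi_{ac}=\xi_{ab}\xi_{bc}$, so $a\mapsto\xi_{a_0a}$ separates $A$ into at most $r$ equivalence classes, i.e. $\#F(A)\le r$. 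Without this (or an equivalent deck-transformation argument), and without the additional care required when $A$ contains critical points of $f$ or $g$ (where $\sigma_{ab}$ is not unique and the extended definition of $\sim_{(f,g)}$ via the local factors $h_j,\phi_j,\psi_j$ must be invoked), the proposal does not yet constitute a proof.
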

\begin{lemma}[\cite{noda}, Lemma 3.1]\label{urabe1} Let $f\in \mathcal{E}_{T}$ and $A$ be a discrete subset of $\mathbb{C}$ such that $\#f^{-1}\left(A\right)=\infty$. Then $\sup_{w\in A}\#\left(f^{-1}\left(\{w\}\right)\cap A^c\right)=\infty$.
\end{lemma}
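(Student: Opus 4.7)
The plan is to argue by contradiction. Suppose $\sup_{w\in A}\#(f^{-1}(\{w\})\cap A^c)\leq N$ for some finite $N$. The case $\#A<\infty$ is immediate: since for each $w\in A$ the fiber $f^{-1}(\{w\})$ splits as $(f^{-1}(\{w\})\cap A)\sqcup(f^{-1}(\{w\})\cap A^c)$, and the sets $\{z\in A:f(z)=w\}$ for varying $w\in A$ partition $\{z\in A:f(z)\in A\}$, one gets
$$\#f^{-1}(A)=\sum_{w\in A}\#f^{-1}(\{w\})\leq\#A+N\cdot\#A=(N+1)\#A<\infty,$$
contradicting the hypothesis. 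So I may assume $A$ is infinite, in which case the counting function $\Delta(r):=\#\{a\in A:|a|\leq r\}$ is finite for each $r$ and tends to infinity with $r$.

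For the infinite case the strategy is to contradict the transcendence of $f$ via Nevanlinna's Second Main Theorem. Applied to any $q$ distinct values $a_1,\dots,a_q\in A$, SMT gives $(q-2)T(r,f)\leq\sum_{k=1}^{q}\overline{N}(r,a_k,f)+S(r,f)$. The hypothesis supplies the key bound on the right-hand side: for each $k$, the distinct preimages of $a_k$ with $|z|\leq r$ split into an $A$-part and an $A^c$-part, the latter of size at most $N$, while the $A$-parts for distinct $k$ are disjoint subsets of $A\cap\{|z|\leq r\}$. Hence $\sum_{k=1}^{q}\overline{n}(r,a_k,f)\leq qN+\Delta(r)$, which integrates to $\sum_{k=1}^{q}\overline{N}(r,a_k,f)\leq qN\log r+\mathcal{N}(r)+O(\log r)$, where $\mathcal{N}(r):=\int_{0}^{r}\Delta(t)/t\,dt$.

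The decisive step is to let $q$ grow with $r$ by taking $\{a_1,\dots,a_q\}=A\cap\{|z|\leq M(r,f)\}$, so that $q=q(r)=\Delta(M(r,f))\to\infty$. Monotonicity of $\Delta$ combined with $M(r,f)\geq r$ yields $\mathcal{N}(r)\leq\Delta(r)\log r\leq q\log r$. Substituting back into SMT and dividing by $q$ produces
$$\left(1-\frac{2}{q}\right)T(r,f)\leq(N+1)\log r+\frac{S(r,f)+O(\log r)}{q}.$$
Since $S(r,f)=o(T(r,f))$ outside an exceptional set of finite logarithmic measure, letting $r\to\infty$ along such a set forces $T(r,f)=O(\log r)$, contradicting the transcendence of $f$.

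The chief obstacle is that the error term $S(r,f)$ in SMT hides constants that may depend on the chosen values $a_1,\dots,a_q$; if this dependence grows faster than $o(q)$, dividing by $q$ will not neutralise it. The remedy I envisage is to freeze $q$ first, derive the weaker bound $T(r,f)\leq qN\log r/(q-2)+\mathcal{N}(r)/(q-2)+o(T(r,f))$ for $r$ outside the exceptional set attached to that $q$, and then deploy a diagonal argument along a sequence $r_n\to\infty$ that simultaneously avoids the countably many exceptional sets and is chosen so large that $\mathcal{N}(r_n)/(q_n-2)$ is dominated by $T(r_n,f)/2$ with $q_n\to\infty$; this again yields $T(r_n,f)=O(\log r_n)$ along the sequence, closing the contradiction.
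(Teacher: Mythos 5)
First, a point of reference: the paper does not prove this statement at all --- it is quoted verbatim from Noda's thesis (\cite{noda}, Lemma 3.1) and used as a black box --- so there is no in-paper argument to compare yours against; what follows is an assessment of your proposal on its own terms. Your finite-$A$ case is fine, and the fiber decomposition giving $\sum_{k=1}^{q}\overline{n}(r,a_k,f)\leq qN+\Delta(r)$ for $a_1,\dots,a_q\in A$ is correct. The problem is everything that happens after that inequality.

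The genuine gap is that $\Delta(r)$, hence $\mathcal{N}(r)$, is the counting function of an \emph{arbitrary} discrete set $A$ and has no relation whatsoever to $T(r,f)$: one may take $A$ to contain, say, $e^{e^{n}}$ points on each circle $|z|=n$, so that $\mathcal{N}(r)/T(r,f)\to\infty$ as fast as one likes. This kills both of your routes. In the ``freeze $q$ first'' remedy, the bound $(q-2)T(r,f)\leq qN\log r+\mathcal{N}(r)+o(T(r,f))$ with $q$ fixed is then simply true and vacuous for \emph{all} large $r$, so no choice of $r_n$ (exceptional set or not) with $\mathcal{N}(r_n)\leq (q_n-2)T(r_n,f)/2$ exists once $q_n$ has been fixed; the diagonal argument cannot be started. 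To neutralise $\mathcal{N}(r)$ you are forced back to $q=q(r)\geq\Delta(r)$, i.e.\ to the coupled choice $\{a_1,\dots,a_q\}=A\cap\{|z|\leq M(r,f)\}$ --- but then the Second Main Theorem you invoke is no longer available in the form you use it. Its error term is not merely ``$S(r,f)$ divided by $q$'': the standard derivation produces, besides $q$ separate logarithmic-derivative terms each with its own exceptional set, a term of size $q\log^{+}(3q/\delta)$ where $\delta=\min_{i\neq j}|a_i-a_j|$, and for values drawn from an arbitrary discrete $A$ the gap $\delta=\delta(r)$ may shrink so fast that $\log^{+}(1/\delta(r))$ dominates $T(r,f)$ even after division by $q$. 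There are also the first-main-theorem corrections $\log^{+}|a_k|\leq\log M(r,f)$, which after dividing by $q$ are comparable to $T(2r,f)$ rather than $o(T(r,f))$. So the circularity you yourself flag ($q$ must depend on $r$ to beat $\Delta(r)$, while the SMT constants and exceptional sets must be fixed before $r$ is chosen) is not resolved by the sketch, and I do not see how it can be resolved within this framework without imposing extra hypotheses on $A$ (e.g.\ uniform discreteness) that the lemma does not grant. A correct proof has to exploit the hypothesis $\#f^{-1}(A)=\infty$ and the structure of fibers more directly (for instance via the Picard--Borel fact that a transcendental entire function takes every value with at most one exception infinitely often), rather than trying to run a value-distribution count against the unconstrained density of $A$.
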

\begin{lemma}[\cite{noda},Lemma 5.3]\label{noda0} Let $f$ be a periodic entire function and $A$ be a discrete subset of $\mathbb{C}$ such that $\#\left[f^{-1}\left(A\right)\right]=\infty$. Then $\sup_{w\in A}\#\left[f^{-1}\left(\{w\}\right)\cap A^c\right]=\infty$.
\end{lemma}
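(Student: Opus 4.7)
The plan is to reduce the problem to the non-periodic Lemma~\ref{urabe1} by lifting $f$ to the quotient cylinder $\mathbb{C}/\lambda\mathbb{Z}$. A non-constant periodic entire function is automatically transcendental, so I would factor $f = g \circ e$ with $e(z) = \exp(2\pi i z/\lambda)$ and $g \in \mathcal{H}(\mathbb{C}^*)$; in the only interesting case, $g$ has essential singularities at both $0$ and $\infty$ (otherwise $g$ is rational and only finitely many classes of preimages can exist per value, so the hypothesis quickly forces the conclusion). The map $e$ gives a bijection between classes $[z]$ and points $\zeta = e(z) \in \mathbb{C}^*$, under which $g(\zeta) = f(z)$. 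Introducing
\[
\widehat{A} := \{\zeta \in \mathbb{C}^* : e^{-1}(\zeta) \subset A\},
\]
one checks directly that $\#[f^{-1}(A)] = \#g^{-1}(A)$ and
\[
\#\bigl[f^{-1}(\{w\}) \cap A^c\bigr] \;=\; \#\bigl(g^{-1}(\{w\}) \setminus \widehat{A}\bigr)
\]
for every $w$, since a class $[z]$ has some representative in $A^c$ precisely when $[z]$ is not entirely contained in $A$. Thus it suffices to prove: if $\#g^{-1}(A) = \infty$, then $\sup_{w \in A}\#\bigl(g^{-1}(\{w\}) \setminus \widehat{A}\bigr) = \infty$.

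Next, I would argue by contradiction, assuming this supremum is bounded by some finite $M$, and split on the size of $\widehat{A}$. If $\widehat{A}$ is finite, Picard's big theorem applied at each essential singularity of $g$ gives that $g^{-1}(\{w\})$ is infinite for every $w \in \mathbb{C}$ outside a finite exceptional set; the hypothesis $\#g^{-1}(A) = \infty$ then forces $A$ to contain a non-exceptional $w_0$, whence
\[
\#\bigl(g^{-1}(\{w_0\}) \setminus \widehat{A}\bigr) \;\geq\; \#g^{-1}(\{w_0\}) - \#\widehat{A} \;=\; \infty,
\]
contradicting the bound $M$. This handles the easy half of the argument.

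If instead $\widehat{A}$ is infinite, the boundedness assumption forces $\#(g^{-1}(\{w\}) \cap \widehat{A}) = \infty$ for every non-exceptional $w \in A$, and this is where the main obstacle lies. I would mimic the proof of Lemma~\ref{urabe1} in the $\mathbb{C}^*$-setting by treating the two essential singularities of $g$ separately: restrict to the annuli $\{|\zeta| \geq 1\}$ and $\{|\zeta| \leq 1\}$ and, via the substitutions $\zeta \mapsto \zeta$ and $\zeta \mapsto 1/\zeta$, reduce each region to a punctured neighbourhood of one singularity, where the Nevanlinna-type counting estimates underlying Lemma~\ref{urabe1} can be invoked to contradict the discreteness of $\widehat{A}$. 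The delicate point will be to avoid double-counting near $|\zeta| = 1$ and to verify that the growth estimates of Lemma~\ref{urabe1} transfer with only cosmetic changes to each punctured neighbourhood.
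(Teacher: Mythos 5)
The paper does not prove this statement at all: it is imported verbatim as Lemma 5.3 of Noda \cite{noda}, so there is no internal proof to compare against and your attempt has to stand on its own. Your cylinder reduction is the natural first move and the bookkeeping is correct: writing $f=g\circ e$ with $e(z)=\exp(2\pi i z/\lambda)$, the bijection $[z]\mapsto e(z)$ does give $\#\left[f^{-1}(A)\right]=\#g^{-1}(A)$ and $\#\left[f^{-1}(\{w\})\cap A^c\right]=\#\left(g^{-1}(\{w\})\setminus\widehat{A}\right)$, and $\widehat{A}$ is discrete in $\mathbb{C}^{*}$. But two steps fail.

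First, the case you dismiss in parentheses is not harmless. If $g$ extends rationally it is a Laurent polynomial (since $f$ is entire), so every fibre $g^{-1}(\{w\})$ has at most $\deg g$ points and hence $\sup_{w\in A}\#\left(g^{-1}(\{w\})\setminus\widehat{A}\right)\leq\deg g<\infty$, while $\#g^{-1}(A)$ is infinite as soon as $A\cap g(\mathbb{C}^{*})$ is infinite. Concretely, $f(z)=e^{z}$ with $A=\mathbb{N}$ satisfies the hypotheses as transcribed here ($\#\left[f^{-1}(A)\right]=\infty$), yet each fibre is a single residue class, so the supremum equals $1$. So ``the hypothesis quickly forces the conclusion'' is false there; the statement as quoted is evidently missing the standing hypotheses of Noda's Section 5 (where $f=h(e^{z})$ with $h\in\mathcal{H}(\mathbb{C}^{*})$ having essential singularities at $0$ and $\infty$ -- exactly the setting of Lemma \ref{l3}, which is where this lemma is used). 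Second, and independently, the case carrying all the analytic content -- $\widehat{A}$ infinite -- is not proved: ``mimic the proof of Lemma \ref{urabe1}'' is a plan, not an argument. The reduced assertion is not an instance of Lemma \ref{urabe1}: the set $\widehat{A}$ removed in the domain no longer coincides with the set $A$ appearing in the range, and $g$ lives on $\mathbb{C}^{*}$ with two essential singularities, so the Picard/counting argument behind Lemma \ref{urabe1} must be re-derived in this setting with the interaction between $A$, $\widehat{A}$ and the two singularities checked explicitly. As written, the proposal establishes only the easy half of the lemma.
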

\begin{lemma} [\cite{noda}, Lemma 5.4] \label{l3} Let $h\in \mathcal{H}(\mathbb{C}^{*})$ satisfying that $\#\left\{w:h^{'}(w)=0\right\}=\infty$. Put $f(z)=h(e^{z})$. Let $g\in \mathcal{E}_{T}$ such that $g$ permutes with $f$. Then for each $N\in \mathbb{N}$, there exists $c$ such that $g^{'}(c)=0, \#\left\{[z]: f(z)=c, f^{'}(g(z))=0\right\}\geq N.$
\end{lemma}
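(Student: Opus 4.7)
The plan is to invoke Lemma \ref{noda0} applied to the periodic function $f$ and the discrete set $A:=C_g:=\{c:g'(c)=0\}$, and then convert its output into the desired conclusion using the commutativity of $f$ and $g$. Set also $C_f:=\{z:f'(z)=0\}$. Since $f(z)=h(e^z)$, the $2\pi i$-equivalence classes $[z]\subset C_f$ are in bijection with the critical points of $h$ in $\mathbb{C}^{*}$, so the hypothesis $\#\{w:h'(w)=0\}=\infty$ yields $\#[C_f]=\infty$.

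Differentiating the commutation $f(g(z))=g(f(z))$ gives the key identity
\begin{equation*}
f'(g(z))\,g'(z)\;=\;g'(f(z))\,f'(z).
\end{equation*}
If $c\in C_g$ and $z\in f^{-1}(\{c\})$, then $g'(f(z))=g'(c)=0$, the right-hand side vanishes, and therefore $g'(z)=0$ or $f'(g(z))=0$. In particular, every $z\in f^{-1}(\{c\})\cap C_g^{c}$ witnesses $f(z)=c$, $g'(c)=0$ and $f'(g(z))=0$ simultaneously, so that
\[
\#\bigl[f^{-1}(\{c\})\cap C_g^{c}\bigr]\;\leq\;\#\bigl\{[z]:f(z)=c,\ f'(g(z))=0\bigr\}.
\]
By Lemma \ref{noda0}, once $\#[f^{-1}(C_g)]=\infty$ is established, for every $N$ there is $c\in C_g$ with $\#[f^{-1}(\{c\})\cap C_g^{c}]\geq N$, and the conclusion of Lemma \ref{l3} follows.

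The main obstacle is therefore verifying $\#[f^{-1}(C_g)]=\infty$. Since $h$ has essential singularities at $0$ and $\infty$, Picard's big theorem supplies $\#h^{-1}(w)=\infty$, and hence $\#[f^{-1}(\{w\})]=\infty$, for every $w\in\mathbb{C}$ apart from the (at most two) Picard exceptional values of $h$. So if $C_g$ contains any non-exceptional value we are done immediately. The delicate alternative, that $C_g$ is either empty or wholly contained in these exceptional values, I would exclude by applying the commutativity identity at critical points of $f$: for each $z_0\in C_f\setminus C_g$ (a set of infinite $[\cdot]$-count in the delicate case) the identity gives $f'(g(z_0))\,g'(z_0)=0$ with $g'(z_0)\neq 0$, so $g(z_0)\in C_f$. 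Thus $g$ would send cofinitely many equivalence classes of the lattice-like set $C_f$ into $C_f$ itself, and combining this strong rigidity with the growth comparison $M(r,g)<M(r,f^{n})$ afforded by Lemma \ref{baker} should contradict $g\in\mathcal{E}_T$, completing the verification.
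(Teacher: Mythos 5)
The paper offers no proof of Lemma \ref{l3} to compare against: it is quoted verbatim as Lemma 5.4 of Noda \cite{noda}. Judged on its own terms, your reduction is the natural one and its sound parts are genuinely sound: differentiating $f\circ g=g\circ f$ gives $f'(g(z))g'(z)=g'(f(z))f'(z)$, so for $c$ with $g'(c)=0$ every $z\in f^{-1}(\{c\})$ outside $C_g:=\{z:g'(z)=0\}$ satisfies $f(z)=c$ and $f'(g(z))=0$; hence Lemma \ref{noda0} applied to the discrete set $A=C_g$ yields the conclusion once $\#\left[f^{-1}(C_g)\right]=\infty$ is known. Your Picard step is also essentially right (with the small remark that at least one of $0,\infty$ must be an essential singularity of $h$ -- otherwise $h'$ would be rational on $\mathbb{C}^{*}$ and could not have infinitely many zeros -- and that there is then at most \emph{one} value $w$ with $h^{-1}(w)$ finite, not two). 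So the lemma is reduced to the case where $C_g$ is empty or equal to that single exceptional value, i.e.\ $g'$ has at most one zero.

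That residual case is the genuine gap, and it is not a loose end but the entire substance of Noda's Lemma 5.4: if $C_g=\emptyset$ the conclusion of the lemma is literally false, so one must prove this case cannot occur, and nothing you have written does so. The observation $g(C_f\setminus C_g)\subseteq C_f$ is correct but carries no contradiction: a transcendental entire function can map one infinite discrete set into another (interpolation supplies many), and Lemma \ref{baker} bounds $g$ only from \emph{above} by $M(r,f^{n})$, which cannot conflict with such a mapping property. Moreover, no argument built solely from the commutation identity and growth comparisons can work, because without periodicity the delicate case actually happens: take $g(z)=(z-1)e^{z}+1$ and $f=g\circ g$; then $f$ and $g$ permute, $f'=(g'\circ g)\cdot g'$ has infinitely many zeros (since $g^{-1}(0)$ is infinite), yet $g'(z)=ze^{z}$ has exactly one zero. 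Any correct proof must therefore exploit the factorization $f=h\circ\exp$ in an essential way -- e.g.\ by analysing the functional relation $f(g(z+2\pi i))=f(g(z))$ to constrain $g(z+2\pi i)-g(z)$, or by a Nevanlinna count of $\overline{N}(r,0,f'\circ g)$ against the forced factorization $g'=P e^{\gamma}$ together with the periodic structure of $C_f$ -- and this is exactly the machinery Noda develops. As written, your proof is incomplete at its critical step.
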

\begin{lemma}[\cite{yang}, Lemma 2.1]\label{baker1} Suppose that $f,\ g\in \mathcal{E}_{T}$ such that $g(z)=af(z)+b$, where $a,b\in\mathbb{C}$. If $g$ permutes with $f$, then $J(f)=J(g)$.
\end{lemma}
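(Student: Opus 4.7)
The plan is to extract a simple algebraic commutation relation from the permutability and the special form of $g$, show that this relation forces $a$ to be either a root of unity or $1$, and then use this to transfer normality back and forth between $\{f^n\}$ and $\{g^n\}$.

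First I would exploit the permutability. From $f\circ g=g\circ f$ and $g=af+b$ one obtains $f(af(z)+b)=af^2(z)+b$; since $f$ is transcendental its image omits at most one value in $\mathbb{C}$, so in the variable $w=f(z)$ this identity extends by analytic continuation to
$$f(aw+b)=af(w)+b \quad\text{for all } w\in\mathbb{C}.$$
Setting $L(w):=aw+b$, this is $f\circ L=L\circ f$, which immediately gives $L\circ g=g\circ L$ as well, and by an easy induction
$$g^n=L^n\circ f^n,\qquad f^n=L^{-n}\circ g^n,\qquad n\in\mathbb{N}.$$

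Second, I would pin down $a$. Assume first $a\ne 1$ and let $z_0:=b/(1-a)$ be the unique fixed point of $L$. Writing $F(w):=f(w+z_0)-z_0$, the functional equation becomes $F(aw)=aF(w)$. Expanding $F(w)=\sum_{n\ge 0}c_nw^n$ and matching coefficients yields $c_n(a^n-a)=0$ for every $n$. Since $F$ is transcendental, infinitely many $c_n$ with $n\ge 2$ are nonzero, which forces $a^{n-1}=1$ for infinitely many $n$; hence $a$ is a primitive $q$-th root of unity for some $q\ge 2$, and consequently $L^q=\mathrm{id}$. In the remaining case $a=1$ one has $L(z)=z+b$ and $L^n(z)=z+nb$; we may assume $b\ne 0$, otherwise $g=f$ and there is nothing to prove.

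Third, I would prove $F(f)\subseteq F(g)$; the reverse inclusion is symmetric, using $f^n=L^{-n}\circ g^n$. Let $z_0\in F(f)$ and pick a neighbourhood $U$ on which $\{f^n\}$ is normal. Given any subsequence of $\{g^n\}$, I extract a further subsequence as follows. In the root-of-unity case, the pigeonhole principle yields indices $n_{k_j}$ with $L^{n_{k_j}}=L^m$ for a fixed $m\in\{0,\ldots,q-1\}$; passing to a spherically convergent subsequence $f^{n_{k_j}}\to\Phi$ and composing with the homeomorphism $L^m$ of $\widehat{\mathbb{C}}$ gives $g^{n_{k_j}}\to L^m\circ\Phi$ spherically on $U$. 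In the case $a=1$, $b\ne 0$, extract a spherically convergent subsequence $f^{n_{k_j}}\to\Phi$; whether $\Phi$ is a meromorphic function on $U$ or is identically $\infty$, the sequence $g^{n_{k_j}}(z)=f^{n_{k_j}}(z)+n_{k_j}b$ tends to $\infty$ uniformly on compact subsets of $U$. In either case $\{g^n\}$ is normal on $U$, so $z_0\in F(g)$; passing to complements yields $J(f)=J(g)$.

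The main obstacle is the second step: it is exactly the transcendence of $f$, combined with the identity $f\circ L=L\circ f$, that forces $a$ to be a root of unity (in particular $|a|=1$). Without this conclusion the family $\{L^n\}$ of affine maps would fail to be normal, and the transfer of normality between $\{f^n\}$ and $\{g^n\}$ could genuinely break down; once this algebraic rigidity of $a$ is in hand, the rest of the argument is a routine extraction of subsequences.
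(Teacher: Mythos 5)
The paper itself offers no proof of this lemma --- it is imported verbatim from Poon--Yang \cite{yang} --- so I am judging your argument on its own merits. Your first two steps are correct and are the natural way in: extending $f(af(z)+b)=af^2(z)+b$ to $f\circ L=L\circ f$ via Picard plus the identity theorem, and the coefficient computation showing that $a$ is a root of unity (or $a=1$). The root-of-unity case is also handled correctly; note it can be closed even faster, since $L^q=\mathrm{id}$ gives $g^q=f^q$ and hence $J(g)=J(g^q)=J(f^q)=J(f)$.

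The genuine gap is in the case $a=1$, $b\neq 0$, when the limit function $\Phi$ is identically $\infty$. From $f^{n_{k_j}}\to\infty$ uniformly on compacta you cannot conclude that $g^{n_{k_j}}=f^{n_{k_j}}+n_{k_j}b\to\infty$: the added drift $n_{k_j}b$ is exactly of the size at which an escaping orbit can cancel it. Concretely, take $b=1$ and $f(z)=z-1+\varepsilon e^{2\pi i z}$, which satisfies $f(z+1)=f(z)+1$; in a region where $\mathrm{Im}\, z$ is large one has $f^n(z)\approx z-n\to\infty$, yet $g^n(z)=f^n(z)+n$ stays bounded and converges to a finite (non-constant) limit. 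So the family $\{g^n\}$ is still normal there, but not for the reason you give, and nothing in your argument rules out non-normal behaviour of $f^{n_k}+n_kb$ when $f^{n_k}\to\infty$ only linearly. The standard repair bypasses the case analysis entirely: since $L$ commutes with $f$ and is an automorphism of $\mathbb{C}$ extending to a homeomorphism of $\widehat{\mathbb{C}}$, one gets $L(F(f))=F(f)$; combined with the complete invariance of $F(f)$ under $f$, this shows $g^{-1}(F(f))=f^{-1}\bigl(L^{-1}(F(f))\bigr)=F(f)$, so every $g^n$ maps $F(f)$ into itself and hence omits the infinite set $J(f)$ there. Montel's theorem then gives $F(f)\subseteq F(g)$, and the symmetric argument with $f=L^{-1}\circ g$ gives the reverse inclusion. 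You should either adopt this invariance-plus-Montel argument or supply a genuine justification of normality of $\{f^{n}+nb\}$ in the escaping case; as written, that step fails.
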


 Lemma 5.1 of \cite{noda} can be straight way extended to:
\begin{lemma}\label{newperiod} Suppose that $f$ is periodic entire function with a period $\lambda\neq 0$ and $g$ be entire function permutable with $f$ and $\left(F,S\right)$ be a greatest common right  factor of $f$ and $\exp {(2\pi i/ \lambda) g}$. Let there be a subset $A\subset\mathbb{C}$ such that $\#f(A)=1$ and $\#\exp {(2\pi i/\lambda )g(A)}=1$ and $r$ be the order of $f$ at some point of $g\(A\)$. Then there exists a subset $A'\subset A$ such that $\#F(A')=1$ and $\#A'\geq \# A/{r}$.
\end{lemma}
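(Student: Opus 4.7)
The plan is to follow the proof of Noda's Lemma 5.1 (see \cite{noda}), adapting the arguments so that the specific periodic factor used there is replaced by an arbitrary $\lambda$-periodic entire $f$ and the right factor of $g$ is replaced by $\tilde g := e^{(2\pi i/\lambda)g}$. The periodicity of $f$ means that $f(w_1)=f(w_2)$ whenever $w_1-w_2\in\lambda\mathbb{Z}$, equivalently whenever $E(w_1)=E(w_2)$ for $E(z):=e^{(2\pi i/\lambda)z}$. Hence the condition $\tilde g(z)=\tilde g(w)$ is the natural substitute, in the $\lambda$-periodic setting, for $g(z)=g(w)$, which explains why the greatest common right factor in the statement is taken of the pair $(f,\tilde g)$ rather than of $(f,g)$.

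First I would apply Lemma \ref{rightfactor} to the pair $(f,\tilde g)$ to obtain $F\in\mathcal{H}(\mathbb{C})$ and $\phi,\psi\in\mathcal{H}(F(\mathbb{C}))$ with $f=\phi\circ F$ and $\tilde g=\psi\circ F$; this is the $F$ appearing in the hypothesis. Let $c_1\in\mathbb{C}$ and $c_2\in\mathbb{C}^{*}$ denote the unique values of $f$ and $\tilde g$ on $A$. For every $z\in A$ the equalities $\phi(F(z))=c_1$ and $\psi(F(z))=c_2$ hold, so $F(A)\subset\phi^{-1}(c_1)\cap\psi^{-1}(c_2)$. The permutability $f\circ g=g\circ f$ together with $f(z)=c_1$ for $z\in A$ yields $f(g(z))=g(c_1)$, so $g(A)\subset f^{-1}(g(c_1))$; in particular the distinguished point $w_0\in g(A)$ at which $f$ has order $r$ lies in $f^{-1}(g(c_1))$. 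The local $r$-to-$1$ structure of $f$ at $w_0$, pushed through the factorization $f=\phi\circ F$ and combined with the $\lambda$-periodicity of $f$, is used exactly as in Noda's argument to show that $F$ restricted to $A$ takes at most $r$ distinct values.

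Once the bound $\#F(A)\leq r$ is established, the pigeonhole principle produces a fibre of $F\vert_{A}$ of size at least $\#A/r$, which is the required $A'$. I expect the main obstacle to be precisely the step that converts the order-$r$ hypothesis at a point of $g(A)\subset\mathbb{C}$ into the bound on $\#F(A)$: in the non-periodic Lemma \ref{noda} both $f$ and $g$ factor through $F$, whereas here $\tilde g$ factors through $F$ but $g$ itself only factors through $F$ after post-composition with the covering $E$. One must therefore use that the local degree of $f$ is invariant under translation by $\lambda$, so that any choice of representative in $g(A)$ is as good as any other and the counting takes place effectively on $\tilde g(A)=E(g(A))$. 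This translation step is the real content of the phrase ``straight way extended'' in the statement, and it is the reason the hypothesis of periodicity of $f$ is essential to the passage from Lemma \ref{noda} to Lemma \ref{newperiod}.
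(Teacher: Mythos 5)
The paper offers no proof of this lemma at all---it merely asserts that Noda's Lemma 5.1 ``can be straight way extended''---and your proposal follows exactly that intended route: rerun Noda's argument with the right factor $g$ replaced by $e^{(2\pi i/\lambda)g}$, using the $\lambda$-periodicity of $f$ (so that $f\circ g$ factors through $e^{(2\pi i/\lambda)g}$ and the local degree of $f$ at a point of $g(A)$ is invariant under translation by $\lambda$, making the choice of representative immaterial) to push the counting through to $\#F(A)\leq r$ and then apply the pigeonhole principle. Your sketch, including the correct identification of where periodicity is genuinely needed, is consistent with---and rather more explicit than---the paper's one-line treatment, with the sole caveat that the central step bounding $\#F(A)$ is deferred to Noda in your write-up just as it is in the paper.
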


 Now we state and prove the main result of this section:
\begin{theorem} \label{t2} Let $f$ be a periodic entire function satisfying the following properties:
\begin{itemize} 
\item[(i)] $f$ is prime in entire sense;
\item[(ii)] $\#\left\{[z]: f^{'}(z)=0\right\}=\infty ;$
\item[(iii)] the set $\left\{z\in \mathbb{C}: f(z)=c, f^{'}(z)=0 \right\}$ is distributed over a finite number of distinct straight lines for any $c\in \mathbb{C}$; and
\item[(iv)] the multiplicities of zeros of $f^{'}$ are uniformly bounded.
\end{itemize}
If $g$ is any non-linear entire function permutable with $f$, then $J(g)=J(f)$.
\end{theorem}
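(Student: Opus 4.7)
The plan is to reduce to the case $g(z) = a f(z) + b$ for some constants $a, b \in \mathbb{C}$, then invoke Lemma \ref{baker1}. The structure follows Noda \cite{noda}, adapted to the combinatorial hypotheses (ii)--(iv).

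First, since $g$ is non-linear and permutes with the transcendental entire $f$, Lemma \ref{baker0} forbids $g$ from being a polynomial, so $g \in \mathcal{E}_T$. Let $\lambda \neq 0$ be a period of $f$. From $f \circ g = g \circ f$ and $f(z+\lambda) = f(z)$ one obtains $f(g(z+\lambda)) = f(g(z))$, so $g(z+\lambda) - g(z)$ takes values in the discrete fibre of $f$ through $g(z)$; continuity and the structure of the period lattice pin this difference to $m\lambda$ for some $m \in \mathbb{Z}$. Consequently $G(z) := e^{2\pi i g(z)/\lambda}$ is entire and $\lambda$-periodic. I then invoke Lemma \ref{rightfactor} on the pair $(f, G)$ to produce the greatest common right factor $F$ with $f = \phi \circ F$ and $G = \chi \circ F$; hypothesis (i) forces $\phi$ or $F$ to be linear.

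The core task is to eliminate the case $F$ linear. If $F(z) = \alpha z + \beta$, then $\sim_{(f, G)}$ is the identity on $\mathbb{C}$, so no two distinct points are $\sim_{(f, G)}$-equivalent. I would produce arbitrarily large equivalent sets using (ii)--(iv) together with the counting machinery. Property (ii) supplies infinitely many periodicity classes $[z]$ of zeros of $f'$; property (iii) confines their $f$-images to finitely many lines, forcing infinitely many such classes to share one fibre $f^{-1}(c)$. Writing $f(z) = h(e^{2\pi i z/\lambda})$ for some $h \in \mathcal{H}(\mathbb{C}^*)$, Lemma \ref{l3} furnishes for each $N$ a constant $c$ with $g'(c) = 0$ and $\#\{[z] : f(z) = c,\ f'(g(z)) = 0\} \geq N$. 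Then Lemma \ref{newperiod} (with the uniform local degree bound from (iv)) extracts a subset $A' \subset f^{-1}(c)$ with $\#F(A') = 1$ and $\#A'$ unbounded; since $A'$ then contains distinct $\sim_{(f, G)}$-equivalent points, $F$ cannot be linear.

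Hence $\phi$ is linear, say $\phi(w) = \alpha w + \beta$, and $F = (f - \beta)/\alpha$, giving $e^{2\pi i g(z)/\lambda} = \chi((f(z) - \beta)/\alpha)$. The remaining step is to show $\chi(w) = e^{\mu w + \nu}$, so that taking logarithms (and absorbing the integer ambiguity via single-valuedness of $g$) yields $g = a f + b$. Here Lemma \ref{baker} bounds $M(r, g) < M(r, f^n)$, and feeding the resulting growth of $\chi \circ F$ into Lemma \ref{clunie1}, combined with the permutation identity $g \circ f = f \circ g$, excludes all non-affine-exponential $\chi$. Lemma \ref{baker1} then delivers $J(f) = J(g)$. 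The principal obstacle is the elimination of $F$ linear: one must translate (ii)--(iv) into sufficiently many $\sim_{(f, G)}$-coincidences via iterated use of Lemmas \ref{newperiod}, \ref{noda}, and \ref{noda0}, the key technical input being the careful choice of fibre $A = f^{-1}(c) \cap \{f' = 0\}$ supplied by Lemma \ref{l3}.
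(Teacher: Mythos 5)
Your overall strategy coincides with the paper's: work with the pair $\left(f,\exp{(2\pi i/\lambda)g}\right)$, extract a greatest common right factor $F$ via Lemma \ref{rightfactor}, use Lemma \ref{l3} and Lemma \ref{newperiod} to force $F$ to be transcendental, use primeness to make the left factor $\phi$ linear, and close with the growth lemmas and Lemma \ref{baker1}. However, two steps are genuinely broken as written.

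First, Lemma \ref{newperiod} requires a set on which \emph{both} $f$ and $\exp{(2\pi i/\lambda)g}$ are constant, and you never produce the second constancy. Your stated use of (iii) --- that it ``confines the $f$-images to finitely many lines, forcing infinitely many classes to share one fibre'' --- is not what (iii) asserts and is not the role it plays. The actual mechanism is: starting from the set $\mathcal{A}$ supplied by Lemma \ref{l3} (on which $f\equiv c$), permutability gives $g(\mathcal{A})\subset\left\{z: f(z)=g(c),\ f'(z)=0\right\}$; hypothesis (iii) places this target set on $t$ lines; discreteness of the target set then forces $[g(\mathcal{B})]$ to be finite for the sub-collection $\mathcal{B}$ landing on a single line; and pigeonholing over the finitely many residue classes modulo $\lambda$ yields $\mathcal{C}\subset\mathcal{A}$ with $\#\mathcal{C}\geq N/(pt)$ and $\#\exp{(2\pi i/\lambda)g(\mathcal{C})}=1$. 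Without this chain Lemma \ref{newperiod} cannot be applied and the transcendence of $F$ is not established. (Your side claim that $g(z+\lambda)-g(z)\in\lambda\mathbb{Z}$, hence that $\exp{(2\pi i/\lambda)g}$ is $\lambda$-periodic, is also unjustified; fortunately it is not needed anywhere.)

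Second, the endgame cannot deliver $g=af+b$: take $g=f\circ f$, which is non-linear and permutes with $f$, yet $\chi$ is then an exponential of a transcendental function, not $e^{\mu w+\nu}$, and $g$ is not an affine function of $f$. No one-shot growth argument will ``exclude all non-affine-exponential $\chi$.'' What one actually obtains --- after first showing $F(\mathbb{C})=\mathbb{C}$ (again via primeness of $f$) so that $\psi$ is entire, and then $\psi=\exp{G}$ since $\psi$ is zero-free --- is $g=g_1\circ f$ with $g_1$ permuting with $f$. One must iterate this factorization: $g=g_n\circ f^{n}$ so long as each $g_n$ is transcendental, and Lemma \ref{baker} together with Lemma \ref{clunie1} shows the iteration must terminate, i.e.\ some $g_n$ is a polynomial; Lemma \ref{baker0} then makes $g_n$ linear, giving $g=lf^{n}+m$, and Lemma \ref{baker1} applied to the pair $\left(f^{n},g\right)$ yields $J(g)=J(f^{n})=J(f)$.
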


\begin{proof}  Suppose that $\lambda\neq 0$ is the period of $f$ and assume that $g\in \mathcal{E}_{T}$ such that $g$ permutes with $f$. By $(ii)$ and Lemma \ref{l3}, for each positive integer $N$ there exists $c$ such that 
$$g^{'}(c)=0\mbox{ and }\# \left\{[z]: f(z)=c, f^{'}(g(z))=0\right\}\geq N.$$  
Let $\mathcal{A}$ be a subset of $\mathbb{C}$ such that $[z]\neq [w]$ for $z,w\in \mathcal{A}, z\neq w$ and $$[\mathcal{A}]=\left\{[z]: f(z)=c, f^{'}(g(z))=0\right\}.$$ Since $f\circ g(\mathcal{A})=g\circ f(\mathcal{A})=\left\{g(c)\right\},$ we have 
\begin{flalign} \label{eq:a1} g(\mathcal{A})\subset \left\{z:f(z)=g(c), f^{'}(z)=0 \right\}.
\end{flalign}
 In $(iii)$, let us assume that the solutions of the simultaneous equations are distributed on $t$ straight lines, then there exists a subset $\mathcal{B}\subset \mathcal{A}$ such that $g(\mathcal{B})$ lie on a single straight line(which is parallel to the line passing through the origin and $\lambda$) with $\#\mathcal{B}\geq N/t$.

 {\it Claim 1:} $[g{\mathcal(B)}]$ is finite.\\
 Suppose that $[g(\mathcal{B})]$ is infinite. Let $X$ be the set of distinct points such that $[X]=[g(\mathcal{B})]$ and $[z]\neq[w],~~z,w\in X, z\neq w.$ We can choose the set $X$ such that all points of $X$ lie on a small line segment. Since $X$ is infinite, $X$ has an accumulation point which contradicts (\ref{eq:a1}). This establishes the claim. 

 Let $z_1, \cdots, z_p \in \mathbb{C},(p\geq 1)$ such that $$g(\mathcal{B})\subset \bigcup_{i=1}^{p}\left\{z_i+n\lambda: n\in \mathbb{Z}\right\}.$$ Therefore, we have a subset $\mathcal{C}\subset \mathcal{B} $ and $z_i, $ for some $i\in \left\{1,\cdots,p\right\}$ such that $$g(\mathcal{C})\subset \left\{z_i+n\lambda: n\in \mathbb{Z}\right\}\mbox{ and }\#\mathcal{C}\geq N/pt.$$ This implies that $\#\left(\exp{\left(2\pi i/\lambda\right) g(\mathcal{C})}\right)=1$. 

\medskip

On the other hand, $f(\mathcal{C})=\left\{c\right\}$ and thus $\#f(\mathcal{C})=1$. By Lemma \ref{rightfactor}, Lemma \ref{newperiod} and $(iv),$  there exist $F\in \mathcal{H}(\mathbb{C}), \ \phi, \psi \in \mathcal{H}(F(\mathbb{C}))$ and a subset $\mathcal{D}\subset \mathcal{C}$ such that $f=\phi \circ F, \  \exp{\left(2\pi i/\lambda\right) g}=\psi \circ F$, $\#F(\mathcal{D})=1$, $\# \mathcal{D}\geq N/(s+1)pt $, where $s$ denotes the maximum multiplicity of zeros of $f^{'}$. Since we can choose $N$ arbitrarily large, $F\in \mathcal{E}_{T}$.

 We claim that $F(\mathbb{C})=\mathbb{C}.$ For, it is enough to show that $F$ has no exceptional value on $\mathbb{C}$. Suppose on the contrary that there exists $c\in \mathbb{C}$ such that $F=c+e^{Q}$ for some entire function $Q$. Then $f(z)=\phi(c+e^{w})\circ Q(z)$. Since $f$ is prime in entire sense, $Q(z)=a_1 z+a_2(a_1\neq 0).$ Thus $f$ has a period $2\pi i/a_1.$ Since $\lambda$ is the fundamental period of $f$, $2\pi i/a_1= \lambda p$ for some integer $p$. Therefore, $a_1= 2\pi i/{\lambda p}$, $F(z)=c+\exp{(2\pi i/{\lambda p})z+a_2}.$ Since $\#F(\mathcal{D})=1$, we have $(z-w)\in \lambda\mathbb{Z}$ for all $z,w \in \mathcal{D}$. Thus $[z]=[w]$ for all $z,w \in \mathcal{D}$, a contradiction, and therefore, $F(\mathbb{C})=\mathbb{C}.$ Hence, $\phi, \ \psi \in \mathcal{H}(\mathbb{C}).$ 

\medskip
 
 Since $\exp{\left(2\pi i/\lambda\right) g}=\psi \circ F$, we have $\psi(z)\neq 0$ for $z\in \mathbb{C}.$ Therefore, $\psi=\exp{G}$ with $G\in \mathcal{H}(\mathbb{C})$ and so $\exp{\left(2\pi i/\lambda\right) g}=\exp G\circ F.$ Hence  
$$g=\left(\lambda/2\pi i\right)G\circ F+q\lambda$$
 for some $q \in \mathbb{Z}.$ 

 Put $K=\left(\lambda/2\pi i\right)G+q\lambda$. Then $g=K\circ F$. 
 
   Since $F$ is transcendental and $f$ is prime in entire sense, we see that $\phi$ is linear. Therefore, $g=K\circ \phi ^{-1}\circ f$. Put $g_1=K\circ \phi ^{-1}$. Then $g=g_1\circ f.$  Note that $f\circ g_1=g_1\circ f$. If $g_1$ is transcendental, then by the same argument there exists $g_2\in \mathcal{H}(\mathbb{C})$ such that $g=g_1\circ f=g_2\circ f{}^{2}$. Similarly we have $g=g_n\circ f^{n}(n=1,2,\cdots)$, whenever all $g_n(n=1,2,\cdots)$ are transcendental.  This gives a contradiction by Lemma  \ref{baker} and Lemma \ref{clunie1}. Thus, $g_n$ is a polynomial for some $n$. By Lemma \ref{baker0}, we find that $g_n(z)=lz+m(l\neq 0)$ and so $g=lf^{n}+m$. Hence by Lemma \ref{baker1} $J(g)=J(f)$. 
\end{proof}

\medskip

\begin{remark} \label{r1} Condition $(iii)$ in Theorem \ref{t1} as well as Theorem \ref{t4} implies that the set $\left\{z\in \mathbb{C}: H_a(z)=c, H^{'}_a(z)=0 \right\}$ is distributed over two distinct straight lines and single straight line respectively.

\end{remark}
 
\begin{remark} Let $f$ be a periodic entire function satisfying the conclusion of Theorem \ref{t1} (or Theorem \ref{t4}). Let $g\in \mathcal{E}_{T}$ such that $g$  permutes with $f$. Then by Theorem \ref{t2} and Remark \ref{r1}, $J(f)=J(g).$
\end{remark}



\section{A non-periodic subclass of $\mathcal{F}$}
Let $H(z)=P(z)\cdot F(\alpha(z)),$ where $P(z)$ is polynomial of degree $n$, and $F, \alpha \in \mathcal{H}(\mathbb{C})$  such that  $F_{a}(z):=H(z)-a \alpha (z)\in\mathcal{F}$ for any $a\in \mathbb{C}$. 
\medskip

On the similar lines of Lemma \ref{l2} we get:
\begin{lemma}\label{KSC}  Let $H(z)=P(z)\cdot F(\alpha(z)),$ where $P(z)$ is polynomial of degree $n$ , and $F, \alpha \in \mathcal{H}(\mathbb{C}).$ Put $F_{a}(z)=H(z)-a\alpha(z)$, where $a\in \mathbb{C}$. Suppose that $H^{'}$ and $\alpha^{'}$ have no common zeros. Then $\# \{z\in \mathbb{C}:F_{a}(z)=c, F_{a}^{'}(z)=0\}\leq n$, for all $c \in \mathbb{C} $, provided that $a \notin E.$
\end{lemma}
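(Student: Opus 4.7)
My plan is to imitate the proof of Lemma \ref{l1} verbatim, with the sign change $H + a\alpha \rightsquigarrow H - a\alpha$, and then to exploit the structural assumption $H = P\cdot F(\alpha)$ to upgrade the conclusion from ``common $\alpha$-value'' to ``at most $n$ points''.

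First I would set $m(z) := H'(z)/\alpha'(z)$ (well-defined wherever $\alpha'(z)\neq 0$, and the hypothesis that $H'$ and $\alpha'$ have no common zero means the removable singularities are harmless), put $M(z) := H(z) - m(z)\alpha(z)$, cover the set where $m$ is finite and non-critical by disks $D_i$ on which $m$ has a univalent branch $x_i=(m|_{G_i})^{-1}$, and define $y_i(w):=M(x_i(w)) = H(x_i(w)) - w\,\alpha(x_i(w))$. A direct differentiation gives $M'(z) = -m'(z)\alpha(z)$ and hence $y_i'(w) = -\alpha(x_i(w))$. Following the bookkeeping of Lemma \ref{l1}, one removes from $\mathbb{C}$ the critical values of $m$, the coincidence sets $S_{ij}$, and a further countable set to be specified below; the resulting countable exceptional set $E$ enjoys the property that for $a\notin E$ the critical set of $F_a$ equals $\bigcup_i\{x_i(a)\}$ and any collision $y_i(a)=y_j(a)$ forces $y_i'(a)=y_j'(a)$, i.e.\ $\alpha(x_i(a))=\alpha(x_j(a))$.

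Now the factorization $H=P\cdot F(\alpha)$ enters. Fix $c\in\mathbb{C}$ and let $z_1,z_2,\dots$ be the solutions of $F_a=c,\,F_a'=0$. By the previous step they share a common $\alpha$-value $v$, so $F_a(z_k)=c$ reads
\[
P(z_k)\,F(v) - a v = c.
\]
If $F(v)\neq 0$ this is the polynomial equation $P(z_k) = (c+av)/F(v)$ of degree $n$, giving the desired bound $\leq n$ at once.

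The main obstacle is the degenerate case $F(v)=0$. Here I would differentiate $H=P\cdot F(\alpha)$ to get $H'(z) = P'(z)F(\alpha(z)) + P(z)F'(\alpha(z))\alpha'(z)$, which at $z=z_k$ with $F(v)=0$ collapses to $H'(z_k) = P(z_k)F'(v)\alpha'(z_k)$. The hypothesis that $H'$ and $\alpha'$ have no common zero rules out $\alpha'(z_k)=0$ (for then $H'(z_k)$ would also vanish), so $F_a'(z_k)=0$ reduces to $P(z_k)F'(v) = a$; when $F'(v)\neq 0$ this again yields the polynomial bound $\leq n$. The only remaining sub-case is $F(v)=F'(v)=0$, which forces $a=0$ (and $c=0$); to kill it I would simply enlarge the countable set $E$ to contain $0$ (needed only when $F$ admits multiple zeros). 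With that final enlargement $E$ is still countable and the inequality $\#\{z:F_a(z)=c,\,F_a'(z)=0\}\leq n$ holds for every $c\in\mathbb{C}$ whenever $a\notin E$.
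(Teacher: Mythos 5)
Your proposal is correct and follows exactly the route the paper indicates: the paper gives no written proof of Lemma \ref{KSC}, saying only that it is obtained ``on the similar lines of Lemma \ref{l2}'', and your argument is a faithful reconstruction of that plan --- adapt the machinery of Lemma \ref{l1} (with the sign change) to conclude that all solutions of $F_a(z)=c,\ F_a'(z)=0$ share a common $\alpha$-value $v$, then use $H=P\cdot F(\alpha)$ to turn $F_a(z_k)=c$ into the degree-$n$ polynomial equation $P(z_k)=(c+av)/F(v)$. Your handling of the degenerate case $F(v)=0$ (reducing via $H'=P'F(\alpha)+PF'(\alpha)\alpha'$ and the no-common-zero hypothesis to $P(z_k)F'(v)=a$, and absorbing the residual sub-case $F(v)=F'(v)=0$ by putting $0$ into $E$) is a detail the paper never spells out, and it is sound; the only implicit assumption, shared with the paper, is that $\deg P=n\geq 1$ so that $P(z)=\mathrm{const}$ has at most $n$ roots.
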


\begin{lemma}\label{Noda}\cite{ozawa1} Let $F\in \mathcal{E}_{T}$ satisfy $N\(r,0,F^{'}\)> k T\( r,F^{'}\)$ on a set of $r$ of infinite linear measure for some $k>0$. Assume that the simultaneous equations $F(z)=c, F^{'}(z)=0$ have finitely many solutions only for any constant $c$. Then $F$ is left-prime in the entire sense.
\end{lemma}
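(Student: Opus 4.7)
The plan is to argue by contradiction: assume $F = f \circ g$ is an entire factorization with $g$ transcendental, and deduce that $f$ must be linear. The argument splits on the nature of $f$---either (a) $f$ is a polynomial of degree $d \geq 2$, or (b) $f$ is transcendental---and in each case plays the finiteness of $\{z : F(z) = c,\, F'(z) = 0\}$ against the Nevanlinna hypothesis $N(r,0,F') > k\,T(r,F')$.

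For case (a), pick any zero $w_0$ of $f'$. Every $z \in g^{-1}(w_0)$ satisfies $F(z) = f(w_0)$ and $F'(z) = f'(g(z))g'(z) = 0$, so by Picard's great theorem either $g^{-1}(w_0)$ is infinite---immediately contradicting the finiteness hypothesis---or $w_0$ is the unique Picard exceptional value of $g$. In the latter situation $w_0$ must be the only zero of $f'$ (any other would be non-exceptional and would give the previous contradiction), forcing $f(w) = \alpha(w - w_0)^d + \beta$ and $g = w_0 + e^h$ for some entire $h$, whence $F = \alpha e^{dh} + \beta$ and $F' = \alpha d\, h'\, e^{dh}$. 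Since $e^{dh}$ is zero-free, $N(r,0,F') = N(r,0,h') \leq T(r,h')$, while the standard estimate $T(r,h') = o(T(r, e^{dh})) = o(T(r,F'))$ for entire $h$ contradicts the Nevanlinna hypothesis.

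For case (b), first suppose $f'$ has infinitely many zeros. Since $g$ has at most one Picard exceptional value, infinitely many zeros $a$ of $f'$ satisfy $\# g^{-1}(a) = \infty$, and each such $a$ gives $g^{-1}(a) \subset \{z : F(z) = f(a),\, F'(z) = 0\}$, contradicting finiteness. Otherwise $f'$ has only finitely many zeros, so Hadamard factorization writes $f'(w) = P(w)\, e^{H(w)}$ with $P$ a polynomial and $H$ transcendental entire; then $F' = P(g)\, e^{H(g)}\, g'$, yielding
\begin{equation*}
N(r,0,F') \,\leq\, N(r,0,P(g)) + N(r,0,g') \,=\, O(T(r,g)).
\end{equation*}
A direct computation shows $T(r,F') = T(r, e^{H \circ g}) + O(T(r,g))$, and since $H \circ g$ is transcendental entire, Lemma \ref{clunie1} together with the comparison $T(r,u) \leq \log^+ M(r,u) \leq \tfrac{R+r}{R-r} T(R,u)$ for entire $u$ forces $T(r, e^{H \circ g})/T(r,g) \to \infty$ on a suitably large set of radii. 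Consequently $N(r,0,F') = o(T(r,F'))$, again contradicting the hypothesis.

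The main obstacle lies in the last paragraph: Lemma \ref{clunie1} supplies only a $\limsup$-type growth statement, so some bootstrapping is required to ensure that the resulting comparison $T(r,F')/T(r,g) \to \infty$ holds on a subset of the infinite-linear-measure set $\{r : N(r,0,F') > k\,T(r,F')\}$ rather than along some unrelated sequence. The Picard-theoretic reasoning in case (a) and the first half of case (b) is essentially routine; the delicate point is calibrating the Clunie-type compositional growth against the Nevanlinna hypothesis on precisely the right set of radii.
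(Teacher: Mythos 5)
First, a point of order: the paper does not prove this lemma at all --- it is imported verbatim from Ozawa \cite{ozawa1} as a black box --- so there is no in-paper argument to compare yours against, and I can only judge your sketch on its own terms.

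Your skeleton is the classical one and the Picard-theoretic parts are essentially sound. Two repairable inaccuracies: in case (a), a Picard exceptional value of a transcendental entire $g$ may still be attained finitely often, so you only get $g-w_0=Q\,e^{h}$ with $Q$ a polynomial rather than $g=w_0+e^{h}$ (harmless, since the finitely many extra solutions of $F=f(w_0)$, $F'=0$ are permitted); and in case (b2) Hadamard gives $H$ non-constant entire, not transcendental (take $f'(w)=e^{w}$), though your argument only uses non-constancy. The ``standard estimate'' $T(r,h')=o(T(r,e^{dh}))$ in case (a) should be justified by the lemma on the logarithmic derivative applied to $h'=(e^{h})'/e^{h}$, which gives $T(r,h')=m(r,h')=O(\log (r\,T(r,e^{h})))$ outside a set of finite linear measure; that exceptional set is harmless against the infinite-measure hypothesis, but since the whole lemma is calibrated on sets of radii you must say this explicitly. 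With that, cases (a) and (b1) close.

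The genuine gap is the one you flag yourself in case (b2), and it is not a calibration nicety but the technical core of the result. Lemma \ref{clunie1} is a $\limsup$ statement about maximum moduli and compares $f\circ g$ with the \emph{inner} factor only along some unspecified sequence of radii; it cannot yield $T(r,e^{H\circ g})/T(r,g)\to\infty$ on a positive-measure (let alone infinite-measure) subset of the set where $N(r,0,F')>k\,T(r,F')$. Radius-shifted comparisons such as $T(r,u)\le \log^{+}M(r,u)\le \frac{R+r}{R-r}T(R,u)$ do not close the circle either, because for entire $g$ of sufficiently irregular growth $T(2r,g)$ can exceed any prescribed function of $T(r,g)$ along sequences, so the lower bound $T(r,e^{H\circ g})\gtrsim e^{c\,T(r/2,\,H\circ g)}$ obtained from Borel--Carath\'eodory does not dominate $T(r,g)$ at the \emph{same} radius. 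What is actually required is a ``nearly everywhere'' growth comparison of the form $T(r,g)=o\left(T(r,f\circ g)\right)$ outside a set of finite linear measure whenever the outer factor $f$ is transcendental; establishing this (or an equivalent) is the substantive content of Ozawa's proof, and without it your case (b2) --- and hence the lemma --- is not proved. As written, the proposal is a correct outline with the decisive growth lemma asserted rather than supplied.
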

\begin{theorem}\label{npc} Let $F_a(z)= H(z)-a\cdot \alpha (z)\in\mathcal{F}$, where $a\in \mathbb{C}, \ H(z)=P(z)\cdot F(\alpha(z))$ and $P(z)$ is polynomial of degree $n$. Then there exists a countable set $E \subset \mathbb{C}$ such that $F_{a}$ satisfies the following properties for each $a\notin E$.\\
	$(a)$ $F_{a}$ is left prime in entire sense. $F_a$ happens to be prime if $P(z)$ is a polynomial of degree 1.\\
	$(b)$ $\#\{z\in \mathbb{C}: F_{a}(z)=c, F^{'}_{a}(z)=0\}\leq n$ for all $c\in \mathbb{C}$.\\
	$(c)$ $F_{a}$ has infinitely many critical points and each is of multiplicity $1.$\\
	\end{theorem}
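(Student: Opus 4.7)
The strategy is to dispatch the three conclusions in the order (b), (c), (a), since (b) is essentially given by Lemma \ref{KSC} and feeds the hypotheses needed for (a), while (c) produces the critical-point count that both enriches the picture and is needed to apply Ozawa's criterion (Lemma \ref{Noda}). The exceptional set $E$ is constructed cumulatively as a finite union of countable bad-parameter sets, with Lemma \ref{KSC} furnishing the initial batch and proving (b) directly.

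For part (c), I would identify the critical points of $F_a$ with the $a$-level points of the meromorphic function $m(z) := H'(z)/\alpha'(z)$ on the complement of the discrete set $\{\alpha' = 0\}$, on which $H'$ cannot vanish by condition (c) of the class $\mathcal{F}$. The growth hypothesis $T(r,\alpha) = o(T(r,H))$ forbids $m$ from being rational, so Picard's great theorem yields $\#\,m^{-1}(a) = \infty$ for every $a\in\mathbb{C}$ with at most two exceptions, giving infinitely many critical points. A multiple zero $z_0$ of $F_a'$ satisfies $F_a'(z_0) = F_a''(z_0) = 0$, which, after eliminating $a$, reduces to $W(z_0) = 0$ for $W := H'\alpha'' - H''\alpha'$; since $W \equiv 0$ would force $m$ to be constant and thereby violate the growth condition, $W$ has a discrete zero set and the countable collection $\{m(z_0) : W(z_0) = 0\}$ is absorbed into $E$.

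For part (a), I would verify the two hypotheses of Lemma \ref{Noda}: the finiteness of common solutions of $F_a(z) = c,\ F_a'(z) = 0$ is exactly part (b), and the bound $N(r,0,F_a') > k\, T(r,F_a')$ on a set of $r$ of infinite linear measure would follow by comparing $F_a'$ with $H'$ via $T(r,F_a') = (1+o(1))\,T(r,H')$ (using $T(r,\alpha') = o(T(r,H'))$ from the class growth condition together with the logarithmic derivative estimate), then combining the simple-zero count from (c) with the second fundamental theorem applied to $F_a'$ to prevent $0$ from being a totally deficient value of $F_a'$, and adjoining the remaining countably many exceptional $a$-values to $E$. Lemma \ref{Noda} then delivers left-primeness.

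For the stronger conclusion when $\deg P = 1$, part (b) sharpens to $\#\{z : F_a(z) = c,\, F_a'(z) = 0\} \leq 1$, so it remains to rule out a factorization $F_a = \phi \circ g$ with $\phi$ transcendental entire and $g$ entire non-linear. The plan is to follow the case analysis used in the proof of Theorem \ref{t3}: every zero $t$ of $\phi'$ would contribute preimages under $g$ to the common solution set, so all such preimages must collapse to a single point of multiplicity $\deg g$; invoking Picard's theorem for $g$ together with Renyi's and Edrei's theorems and Kobayashi's classification then forces $g$ either to be linear or to have an essential-singularity structure incompatible with $H = P \cdot F(\alpha)$ when $P$ is linear. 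I expect this last step to be the main obstacle, since it requires extracting rigidity specific to the factorization $H = P \cdot F(\alpha)$ rather than from abstract Nevanlinna estimates alone.
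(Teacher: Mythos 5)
Your overall architecture for (b), (c) and the left-primeness half of (a) matches the paper's: Lemma \ref{KSC} gives (b), a Wronskian argument ($W=H'\alpha''-H''\alpha'\not\equiv 0$, exceptional $a$-values collected into $E$) gives the simplicity of critical points, and Ozawa's criterion (Lemma \ref{Noda}) fed by a lower bound on $N(r,0,F_a')$ gives left-primeness. Your use of Picard's theorem on $m=H'/\alpha'$ for the infinitude of critical points is a harmless variant (the paper gets this for free from its Nevanlinna estimate). But your mechanism for the key estimate $N(r,0,F_a')>k\,T(r,F_a')$ is misidentified: the second fundamental theorem ``applied to $F_a'$'' cannot prevent $0$ from being totally deficient for a single entire function (witness $e^z$), and it gives no countable exceptional set in $a$. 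The paper's move is to rewrite $N(r,0,F_a')=N(r,a,H'/\alpha')$ and apply the deficiency machinery to the \emph{fixed} meromorphic function $H'/\alpha'$, whose set of deficient values is countable; that is the sole source of the exceptional set $E_1$ here, and your write-up needs this reorganization to go through.

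The second, more serious gap is the $\deg P=1$ case. You plan to transplant the case analysis of Theorem \ref{t3} (Renyi, Edrei, Kobayashi), and you flag it as the main obstacle; but those tools belong to the periodic setting and do not apply, since $F_a$ is non-periodic (Remark \ref{r4}). The paper's argument is short: left-primeness already forces the right factor to be linear when it is transcendental, so the only case left is $F_a=g\circ h$ with $g$ transcendental and $h$ a polynomial of degree at least two; the Nevanlinna estimate forces $g'$ to have infinitely many zeros $\{z_n\}$, and for all but finitely many $n$ the equation $h(z)=z_n$ has at least two distinct roots, each solving $F_a(z)=g(z_n)$, $F_a'(z)=0$ simultaneously --- contradicting (b) with $n=1$. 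Your observation that the preimages would have to ``collapse to a single point of multiplicity $\deg h$'' is the right germ (a polynomial $g'$ of degree $\deg h-1$ cannot vanish at infinitely many points), but you should discard the Renyi/Edrei/Kobayashi detour. Finally, you prove only primeness \emph{in the entire sense}; the statement asserts primeness outright, and the paper's last step --- upgrading via non-periodicity (Remark \ref{r4}) and Lemma 3.1 of \cite{chuang} --- is missing from your proposal.
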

	
\begin{remark}\label{r4} By (b) and the first half of (c) of  Theorem \ref{npc}, it follows that for each $a\notin E$, $F_a$ is non-periodic.
\end{remark}
From $(b)$ and  first half of $(c)$ in Theorem \ref{npc} we observe that $F_{a}$ can not be of the form $f\circ q$ for some periodic entire function $f$ and polynomial $q$. Suppose on the contrary that 
\begin{equation}\label{eq:15}
F_{a}(z)=f\circ q(z),
\end{equation}
for some periodic entire function $f$ with period $\lambda$ and for some polynomial $q$. Then 

\begin{equation}\label{eq:16}
F^{'}_{a}(z)=f^{'}(q(z))\cdot q^{'}(z).
\end{equation}

By first half of (c) in Theorem \ref{npc}, $f^{'}$ has infinitely many zeros. Let $f^{'}(z_0)=0$ for some $z_0 \in \mathbb{C}$. Then $f(z_{0}+n\lambda)=f(z_0)$ and $f^{'}(z_{0}+n\lambda)=0$ for all $n\in \mathbb{Z}$. Let $w_{n}\in \mathbb{C}$ be such that $q(w_n)=z_{0}+n\lambda$ for  $n\in \mathbb{Z}$. Then $F_{a}(w_n)=f(z_0)$ and $F_{a}^{'}(w_n)=0$ for all $n \in \mathbb{Z}$, which violates (b) of Theorem \ref{npc}.\\

\medskip

In fact an entire function satisfying $(b)$ and $(c)$ of Theorem \ref{npc} is not of the form $g\circ Q$, where $g$ is periodic entire function and $Q$ is a polynomial. Thus by Ng[\cite{ng}, Theorem 1] it follows that if $f\in \mathcal{E}_{T}$ satisfying $(a), \ (b),$ and $(c)$ of Theorem \ref{npc} and permuting with a non-linear entire function $g,$ then $J(f)=J(g);$ Theorem \ref{npc} provides an illustration of this conclusion.

\medskip

\textbf{Proof of Theorem \ref{npc}} Let $E_0$ be a countable subset of $\mathbb{C}$ such that the assertions of Lemma \ref{KSC} hold for $F_{a}(z)=H(z)-a\alpha(z)$ as soon as $a\in \mathbb{C}\setminus E_{0}$. Clearly, 
\begin{equation}\label{eq:11}
N(r,0,F_{a}^{'})=N\(r,a,\frac{H^{'}}{\alpha^{'}}\).
\end{equation}
By the second fundamental theorem of Nevanlinna [\cite{Hayman-1}, Theorem 2.3, p.43], it follows that for any $k\in (0,1)$, 
\begin{equation}\label{eq:12}
N\(r,a,\frac{H^{'}}{\alpha^{'}}\)> k T \(r,\frac{H^{'}}{\alpha^{'}}\)
\end{equation}
hold for every $r$ outside a set of finite linear measure and for every $a$ outside $E_1$, where $E_1$ is at most countable subset of $\mathbb{C}$. Let $E_2=E_{0}\cup E_{1}$. Then $E_2$ is an at most countable subset of $\mathbb{C}$ and (\ref{eq:12}) holds for every $a\in \mathbb{C}\setminus E_2$, and thus from (\ref{eq:11}), we have
\begin{equation}\label{eq:13}
N(r,0,F_{a}^{'})>k T\(r, \frac{H^{'}}{\alpha^{'}}\).
\end{equation}
Since $T(r,\alpha)=o\(T(r,H)\)$, (\ref{eq:13}) gives 
$$N(r,0,F_{a}^{'})>k T\(r, H^{'}\).$$
and hence 
\begin{equation}\label{eq:14}
N(r,0,F_{a}^{'})>k T\(r, F_{a}^{'}\)
\end{equation}
holds for all $r$ outside a set of finite linear measure and for all $a\in \mathbb{C}\setminus E_2$. Thus, combining Lemma \ref{KSC} with Lemma \ref{Noda}, it follows that $F_{a}(z)$ is left-prime in entire sense, for all $a\in \mathbb{C}\setminus E_2$.

   To show that $F_a$ is right prime in entire sense, when $P(z)$ is linear polynomial. Let $F_a=g\circ h$, where $g\in \mathcal{E}_{T}$ and $h$ is a polynomial of degree atleast two.  Then from (\ref{eq:14}), $g'$ has infinitely many zeros $\left\{z_n\right\}$. For all $n$ sufficiently large, $h(z)=z_n$ admits atleast two distinct roots which comes out to be the solutions of the simultaneous equations 
	$$F_a(z)=g(z_n), F_a'(z)=0,$$
	contradicting the fact that these simultaneous equations have atmost one solution. Thus $h$ is linear and hence $F_a$ is right prime in  entire sense. This shows that $F_a$ is prime in entire sense. By Remark \ref{r4} and Lemma 3.1 of \cite{chuang}, $F_a$ is prime.

\medskip

	(b) follows from Lemma \ref{KSC} whereas the first half of (c) follows from equation (\ref{eq:14}) .\\
To prove the second half of (c), suppose there is $z_0\in \mathbb{C}$ such that $F_{a}^{'}(z_0)=0, F_{a}^{''}(z_0)=0$. Then 
$$a= \frac{H^{'}(z_0)}{\alpha^{'}(z_0)},~~ \alpha^{'}(z_0)H^{''}(z_0)-\alpha^{''}(z_0)H^{'}(z_0)=0$$
{\it Claim 1 : $\alpha^{'}(z)H^{''}(z)-\alpha^{''}(z)H^{'}(z)\not\equiv0$}.\\
 Suppose on the contrary that $\alpha^{'}(z) H^{''}(z)-\alpha^{''}(z)H^{'}(z)\equiv 0.$ Then 
$$H^{''}=\frac{\alpha^{''}}{\alpha^{'}}H^{'}.$$
Since $\alpha^{'} $ has at least one zero and $ \alpha^{'}$ and $H^{'}$ have no common zero which implies that $H^{''}$ has a pole, a contradiction and this proves the claim. \\
Now it follows that the set 
$$E_{3}:=\left\{t=\frac{H^{'}(z)}{\alpha^{'}(z)}:F^{''}_{t}(z)=0 \right\}$$
is at most countable and for any $a\notin E_3,  \left\{z:\alpha^{'}(z)H^{''}(z)-\alpha^{''}(z)H^{'}=0 \right\}=\phi$. Therefore, $F^{'}_{a}(z)$ has only simple zeros for $a\notin E_3$.\\

The set $E:=E_2\cup E_3$ is at most countable and the above conclusions hold for each $a \notin E$.
$\qed$

\bibliographystyle{amsplain}

\begin{thebibliography}{99}
\bibitem{baker} Baker, I. N.; {\em Zusammensetzungen ganzer funktionen,} Math. Z., \textbf{69} (1958), 121-163.
\bibitem{chuang} Chuang, C. T. and Yang, C. C.; {\em Fix-Points and Factorization of Meromorphic Functions}, World Scientific, Singapore, 1990.
\bibitem{clunie} Clunie, J.; {\em The composition of entire and meromorphic functions, Mathematical Essays Dedicated to A.J. Macintyre,} Ohio Univ. Press, Athens, Ohio, (1970), 75-92.
\bibitem{edrei} Edrei A. and Fuchs, W. H. J.; {\em On the zeros of $f(g(z))$, where $f$ and $g$ are entire functions}, J. Analyse Math., \textbf{12} (1964), 561-571.
\bibitem{gross} Gross, F.; {\em Factorization of Meromorphic Functions}, U. S. Govt. Printing Office, Washington, D. C., 1972.
\bibitem{Hayman-1} Hayman, W. K.; {\em Meromorphic Functions}, Clarendon Press, Oxford, 1964.
\bibitem{kobayashi} Kobayashi, T.; {\em On a characteristic property of the exponential function}, Kodai Math. Sem. Rep., \textbf{29} (1977), 130-156.
\bibitem{ng} Ng, T. W.; {\em Permutable entire functions and their Julia sets}, Math. Proc. Camb. Phil. Soc., \textbf{131} (2001), 129-138.
\bibitem{noda} Noda, Y.; {\em Julia sets of permutable entire functions}, Tokyo Tech., 2004.
\bibitem{ozawa1} Ozawa, M.; {\em On certain criteria for the left-primeness of entire functions}, Kodai Math. Sem. Rep., \textbf{26} (1975), 304-317.
\bibitem{yang} Poon, K. K. and Yang, C. C.; {\em Dynamical behavior of two permutable entire functions}, Ann. Polon. Math., \textbf{68} (1998), 159-163.

\bibitem{Qiao} Qiao, J. Y.; {\em On pseudo-primality of the $2^{n-th}$ power of prime entire functions.} Kodai Math. J., \textbf{11} (1988), 224-232.
\bibitem{renyi} Renyi, A. and Renyi, C.; {\em Some remarks on periodic entire functions}, J. Analyse Math., \textbf{14} (1965), 303-310.
\bibitem{apcharak} Singh, A. P. and Charak, K. S.; {\em Factorization of certain classes of entire and meromorphic functions}, Indian J. Pure App. Math., \textbf{28(1)} (1997), 81-93.
\bibitem{urabe1} Urabe, H.; {\em Some remarks on factorization of entire functions}, Kodai Math. J., \textbf{16} (1993), 196-204.
\bibitem{urabe}  Urabe, H.; {\em A note on permutable entire functions and Julia sets}, Bull. Kyoto Univ. Edu. Series B, \textbf{104} (2004), 73-79.
\end{thebibliography}

\end{document}